\newtheorem{thm}{Theorem} \newtheorem{lemma}{Lemma}  \newtheorem{coro}{Corollary}
\let\paragraph\subsection
\title{A simple sphere theorem for graphs}
\author{Oliver Knill}
\date{October 6, 2019}
\address{Department of Mathematics \\ Harvard University \\ Cambridge, MA, 02138 }
\subjclass{05Cxx, 57M15, 68R10 32Q10}
\keywords{Positive curvature, graphs, sphere theorem, Mickey mouse theorem}
\begin{document}
\maketitle

\begin{abstract}
A finite simple graph $G$ is declared to have positive curvature if 
every in $G$ embedded wheel graph has five or six vertices. A d-graph 
is a finite simple graph $G$ for which every unit sphere is a $(d-1)$-sphere.
A $d$-sphere is a $d$-graph $G$ for which there exists a vertex $x$ such that $G-x$ is contractible.
A graph $G$ is contractible if there is a vertex $x$ such that $S(x)$ and 
$G-x$ are contractible. The empty graph $0$ is the $(-1)$-sphere. The $1$-point graph $1$ is contractible.
The theorem is that for $d \geq 1$, every connected positive curvature $d$-graph
is a $d$-sphere. A discrete Synge result follows: 
a positive curvature graph is simply connected and orientable. For every $d>1$,
there are only finitely many positive curvature graphs. There are six for $d=2$
and all have diameter $\leq 3$. To prove the theorem, we use
a ``geomag lemma" which shows that every geodesic in $G$ can be extended to an immersed 
$2$-graph $S$ of positive curvature and must so be a $2$-sphere with positive curvature. 
As none of these has diameter larger than $3$, also $G$ has a diameter $3$ or less. 
This can be used to show that $G-x$ is contractible and so must be a sphere. 
\end{abstract}

\section{The result} 

\paragraph{}
A finite simple graph $G=(V,E)$ is called a {\bf $d$-graph}, if for all $x \in V$, the unit sphere $S(x)$ 
(the graph generated by the vertices attached to $x$) is a $(d-1)$-sphere. A graph $G$ is called a
{\bf $d$-sphere} if it is a $d$-graph and removing one vertex $x$ renders 
$G-x$ (the graph $G$ with $x$ and all connections to $x$ removed) is contractible. 
A graph $G$ is called {\bf contractible} if there exists a vertex $x$ so that $S(x)$ and $G-x$ are 
both contractible. These inductive definitions define d-graphs, which are discrete manifolds or 
d-spheres which are discrete spheres.
The assumptions are primed by the assumption that the empty graph $0$ is a $(-1)$-sphere 
and that the $1$-point graph $1$ is contractible. A wheel graph $S$, the unit ball of a point
in a 2-graph, is called {\bf embedded} in $G$ if it is a sub-graph of $G$ and if its vertex set 
$W$ generates $S$. In other words, $S$ is embedded if every simplex in $G$ built from vertices in
$W$ is a simples of $S$. A $d$-graph is declared to have {\bf positive curvature}, if all embedded 
wheel graphs $S$ have less than $6$ boundary points. 
This means that the wheel graph $S$ itself including the center has $5$ or $6$ vertices. 
These rather strong curvature assumption allow for a rather strong conclusion: 

\begin{thm}[Simple sphere theorem]
For $d \geq 1$, every connected positive curvature $d$-graph $G$ 
is a $d$-sphere.
\end{thm}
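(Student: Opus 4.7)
The plan is to induct on $d$. For the base case $d = 1$ there is nothing to do: a connected $1$-graph is a cycle $C_n$ with $n \geq 4$; the positive-curvature hypothesis is vacuous since wheel graphs live in dimension $\geq 2$, and removing any vertex leaves a path, so $C_n$ is a $1$-sphere.

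I would handle $d = 2$ directly via discrete Gauss--Bonnet. Assign $K(v) = 1 - \deg(v)/6$; positive curvature bounds the embedded wheel around $v$ to at most $5$ boundary vertices, equivalently $\deg(v) \in \{4,5\}$, so $K(v) \geq 1/6$. Then $\chi(G) = \sum_v K(v) > 0$, and closed-surface topology forces $\chi = 2$ and $|V| \leq 12$. A finite enumeration produces exactly six positive-curvature $2$-graphs, and direct inspection verifies each is a $2$-sphere of diameter at most $3$.

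For $d \geq 3$ the strategy is to bound the diameter via the geomag lemma. Given a geodesic $x_0 x_1 \cdots x_k$ in $G$, at each interior vertex $x_i$ both neighbors $x_{i-1}, x_{i+1}$ sit in the $(d-1)$-sphere $S(x_i)$; choose a cycle through them in $S(x_i)$ and cone to $x_i$ to obtain a wheel $W_i \subset G$. Splice consecutive wheels along the edges $x_i x_{i+1}$ and cap off the two ends to obtain an immersed $2$-graph $S$. The positive curvature of $G$ transfers to $S$ (any embedded wheel in $S$ is a wheel in $G$), so by the $d = 2$ case $S$ is one of the six positive-curvature $2$-spheres, each of diameter $\leq 3$; hence $k \leq 3$ and $\mathrm{diam}(G) \leq 3$.

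Given $\mathrm{diam}(G) \leq 3$, I fix any vertex $x$ and argue that $G - x$ is contractible. The induced graph on $S(x)$ is itself a $(d-1)$-graph of positive curvature, since any embedded wheel in $S(x)$ remains embedded in $G$; by induction it is a $(d-1)$-sphere. The vertices of $G - x$ decompose into the shells $S(x)$, $S_2(x)$, $S_3(x)$, and positive curvature at each outer-shell vertex provides a neighbor one shell inward whose link is contractible, giving an elementary collapse. Iterating these collapses reduces $G - x$ onto a contractible core built from $S(x)$ with one vertex removed. The main obstacle is the geomag lemma itself: producing, around each interior vertex of the geodesic, a wheel whose splice with its neighbors forms a genuine immersed positive-curvature $2$-graph rather than a degenerate complex with spurious edges or incorrect degrees at the seams. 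Once this lemma is in place the diameter bound follows mechanically from the $d = 2$ classification, and the contractibility step reduces to routine discrete deformation arguments in a graph of small diameter.
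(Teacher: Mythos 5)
Your outline matches the paper's own proof almost step for step: the vacuous case $d=1$, the Gauss--Bonnet classification of the six positive curvature $2$-graphs, the geomag lemma giving ${\rm diam}(G)\leq 3$ for $d\geq 3$, and finally contractibility of $G-x$. The one genuine gap is exactly the one you flag: your version of the geomag construction --- choose a cycle through $x_{i-1},x_{i+1}$ in each $S(x_i)$ independently and then ``splice consecutive wheels along the edges $x_ix_{i+1}$'' --- does run into the seam problem you worry about, since cycles chosen independently in $S(x_i)$ and $S(x_{i+1})$ need not fit together into a $2$-graph with boundary near the shared edge. The paper avoids this by growing the surface one wheel at a time rather than splicing pre-built wheels: if $S$ is the partial surface and $x$ a boundary vertex of it, then $S(x)\cap S$ is an \emph{arc} in the $(d-1)$-sphere $S(x)$, and a separate circle-completion lemma extends this arc to a circle in $S(x)$ by taking a geodesic in $S(x)$ with the unit neighborhood of the arc's interior removed; coning that circle to $x$ gives the next wheel, and compatibility with the surface already built is automatic because the new rim contains the old arc. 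That circle-completion lemma is the missing content in your sketch; once it is in place the induction along the geodesic is mechanical, as you say.

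Two smaller points. In the $d=2$ step, $\chi(G)>0$ does not immediately force $\chi=2$: you must exclude the projective plane ($\chi=1$), which the paper does by noting that curvature $\geq 1/6$ would allow at most $6$ vertices, forcing a wheel graph rather than a closed $2$-graph. In your final step, invoking induction to see that $S(x)$ is a $(d-1)$-sphere is redundant --- that is the definition of a $d$-graph. For contractibility of $G-x$ the paper argues via a Lusternik--Schnirelmann type lemma (a $d$-graph that is the union of two $d$-balls is a $d$-sphere), covering $G$ by $B_2(x)$ and unit balls avoiding one fixed vertex; your shell-by-shell collapse is morally equivalent but hinges on the same unproven assertion the paper also leans on, namely that $B_2(x)$ is genuinely a ball under the positive curvature hypothesis.
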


\paragraph{}
One could also call it the {\bf Mickey Mouse sphere theorem}.
According to \cite{StanleyUpper}, Raoul Bott once asked Richard Stanley why he 
wanted to work on a ``Mickey Mouse subject". Bott obviously was teasing as
he showed also great respect for Rota-style combinatorics (still according to \cite{StanleyUpper}).
A reference to Mickey Mouse appears also in \cite{Thurston2002} but in the context of 
hyperbolic surfaces, which can have the shape of a mouse. The name is fitting in 
the positive curvature case, as there are very few graphs which have positive 
curvature and they are all very small. 

\paragraph{}
We need $d>0$ as for $d=0$, the connectedness condition gives the $1$-point graph $G=1$, 
which is not a $0$-sphere. For $d=1$ also, the curvature condition is mute, but all connected 
$1$-graphs are $1$-spheres. For $d=2$, the curvature of a positive curvature graph 
is $K(x)=1-{\rm deg}(x)/6 \in \{1/3,1/6\}$. 
The largest positive curvature graph, the icosahedron with constant curvature $K(x)=1-5/6=1/6$, has diameter $3$. 
The smallest positive curvature graph, the octahedron with constant curvature $K(x)=1-4/6=1/3$, has diameter $2$. 
As we will see, there are exactly six positive curvature graphs in dimension $2$. 

\paragraph{}
In higher dimensions, we argue with a {\bf geomag lemma}:
any geodesic arc $C$ between two points $A,B$ can be extended 
to an embedded $2$-sphere $S$. This surface is by no means unique in general. 
It might surprise that no orientation assumption as in the continuum is needed.
It turns out that the strong curvature condition does not allow for enough room to produce a 
discrete projective plane. As the completion of the arc $C$ to a $2$-dimensional 
discrete surface has diameter $\leq 3$, also $G$ has diameter $\leq 3$. 
As $2$-dimensional spheres are simply connected, there are no shortest geodesic 
curves which are not homotopic to a point. Actually, a half sphere containing 
the closed geodesic defines the homotopy deformation, as it collapses a closed 
loop on the equator to a point on the pole.  This is a discrete Synge theorem. 
But the simple sphere theorem for graphs holds without orientability assumption. 

\paragraph{}
To see that $G$ must be a $d$-sphere, we first note that in a positive 
curvature graph, the union of unit balls centered at a unit ball is a ball. In other words, 
the set of vertices in distance $\leq 2$ form a ball. (This statement 
can fail if the positive curvature assumption is dropped, as then $B_2(x)$ can 
become non-simply connected already). Now pick a point and take $B_2(x)$, the graph generated by all vertices in 
distance $2$ or less from $x$. This is either a ball or the entire $G$. In the later
case, just remove one vertex $z$ with maximal distance to $x$ so that $B$ has a sphere 
boundary and $B$ is contractible. In the former case, cover every vertex $y \in B_2(x)$ with a 
ball $B_x(y)$ in $G$ but always avoid a fixed vertex $z$. We have now covered $G-x$ 
in a way to see that it is contractible. By definition, $G$ is then a $d$-sphere. 

\paragraph{}
Negative curvature graphs can be defined similarly. But the definition shows 
more limitations there: we have so far not seen any negative curvature
graphs, if {\bf negative curvature} means that all
embedded wheel graphs in $G$ have more than $6$ boundary points. One can 
exclude them easily in dimensions larger than $2$: \\

{\bf Remark.} There are no negative curvature graphs for $d>2$.  \\

\begin{proof} 
For $d>2$, any collection of $(d-2)$ intersections of neighboring unit spheres produces a $2$-sphere $S$.
As $S$ has Euler characteristic $2$, Gauss-Bonnet leads to some positive curvature and so at least $6$ and 
maximally $12$ wheel graphs with less than $6$ vertices. For a $3$-graph for example, the unit 
spheres are 2-spheres which must contain at least 6 positive curvature wheel graphs. For a $4$-graph, a 
discrete analogue of a $4$-manifold, the unit spheres are $3$-spheres which by assumption have to have positive 
curvature too. As we have established already, this $3$-graph contains then $2$-spheres and so some
positive curvature. 
\end{proof} 

\paragraph{}
What remains to be analyzed is the case $d=2$. There are a priori only finitely many connected 
negative curvature $2$-graphs $G$ with a given genus $g>1$
because the genus defines the Euler characteristic $\chi(G) = 2-2g<0$. As the curvature $K(x)$ of
every vertex $x \in V$ of a negative curvature graph $G=(V,E)$ is $\leq -1/6$, we see from the 
Gauss-Bonnet formula $\chi(G) = \sum_{x \in V} K(x)$ that the number of vertices in $V$ is 
bounded above by $|\chi(G)|*6 = (2-2g)*6 = (12-12g)$. 

\paragraph{}
We currently believe it should be not too difficult to prove that there are no negative curvature graphs
in the case $d=2$ too. The first case is genus $g=2$, in which case $\chi(G)=-2$. As the negative curvature 
closest to $0$ is $-1/6$, the number of vertices must be $12$ or less. So, the question is whether there 
is a $2$-graph with $12$ vertices and negative curvature. In that particular case $g=2$, there is none.
In general, it appears that there not enough vertices to generate the $g$ ``holes" needed. 
We have not a formal proof of this statement for larger $g$ but believe it can
go along similar lines as in the case $g=2$: if we look at the wheel graph centered at some vertex, 
then we already use $8$ vertices. 
Only 4 vertices are left to build a 2-graph. But they are already needed to satisfy the degree 7 requirement.
Having no vertices (magnetic balls) any more to continue building, we have to identify boundary points. 
Any such identification produces loops of length $2$ which  is not compatible with having a $2$-graph 
(every unit circle must be a circular graph with 4 or more vertices).

\section{Two-dimensional graphs}

\paragraph{}
For $d=2$, we know that the curvatures are constant in the octahedron (curvature is constant $1/3$)
and icosahedron case (curvature is constant $1/6$) and that:

\begin{lemma}[Positive curvature 2-graphs]
There are exactly six positive curvature 2-graphs which are connected.
\footnote{"Arithmetic is being able to count up to twenty 
without taking off your shoes." -- Mickey Mouse}
\end{lemma}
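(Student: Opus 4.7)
The plan is to combine Gauss-Bonnet with a finite case analysis. Because the link of any vertex in a $2$-graph is a $1$-sphere (hence a cycle of length $\geq 4$), the minimum degree is $4$; positive curvature caps the degree at $5$. Writing $V_k$ for the number of degree-$k$ vertices, Gauss-Bonnet gives
\[
 \chi(G)=\sum_x K(x)=\tfrac{V_4}{3}+\tfrac{V_5}{6}>0.
\]
By classification of closed surfaces $\chi(G)\in\{1,2\}$. The projective case $\chi=1$ forces $2V_4+V_5=6$, so $V\leq 6$; the only integer solution with a $2$-graph structure would be $(V_4,V_5)=(0,6)$, giving the graph $K_6$, but then the link of a vertex is $K_5$ rather than a $5$-cycle, contradicting the $2$-graph axiom. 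Hence $\chi(G)=2$, the equation becomes $2V_4+V_5=12$, and $V=12-V_4\in\{6,7,\ldots,12\}$, so the problem reduces to a finite enumeration.

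The extreme cases are classical: the only $4$-regular $2$-graph is the octahedron ($V=6$), and the only $5$-regular one is the icosahedron ($V=12$). For the intermediate values I would unfold from a degree-$4$ vertex. Fix $x$ with $\deg(x)=4$ and link $y_1y_2y_3y_4$. Each $y_i$ has degree $\leq 5$, so its link is a $5$-cycle containing the three vertices $x,y_{i-1},y_{i+1}$ together with exactly two external neighbors. The faces opposite to $x$ along the four edges $y_iy_{i+1}$ introduce second-layer vertices $c_i\in S_2(x)$, and the pentagonal-link condition at each $y_i$ forces $c_{i-1}$ to be adjacent to $c_i$. The remaining freedom is in the identification pattern of the four $c_i$'s and in whether an additional vertex $e\in S_3(x)$ is needed to close the surface.

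Enumerating these patterns: identifying all four $c_i$'s to a single vertex recovers the octahedron; partial identifications produce the pentagonal bipyramid ($V=7$), the snub disphenoid ($V=8$), and the triaugmented triangular prism $J_{51}$ ($V=9$); keeping the $c_i$'s distinct and closing through a single common outer vertex $e$ yields the gyroelongated square bipyramid $J_{17}$ ($V=10$). The case $V=11$, i.e.\ $V_4=1$, must be excluded: starting the unfolding at the unique degree-$4$ vertex, the only consistent closure either collapses the second layer (dropping $V$ below $11$) or forces the closing vertex $e$ to itself have degree $4$, contradicting $V_4=1$.

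The main obstacle I foresee is making the rigidity of the unfolding rigorous in the intermediate range $V\in\{7,8,9\}$, where several identification patterns of the $c_i$'s are a priori possible and one must verify that each yields a unique graph up to isomorphism. The check that no alternative gluing produces a new graph is finite but delicate, since each new potential coincidence among the $c_i$'s must be traced through the pentagonal-link condition at every $y_i$. Once these finite combinatorial checks are complete together with the exclusion of $V=11$, the count of accepted cases is exactly six.
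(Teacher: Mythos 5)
Your proposal follows essentially the same route as the paper: Gauss--Bonnet bounds the vertex count to $\{6,\dots,12\}$, and a finite case analysis (with $v=11$ excluded because the unique degree-$4$ vertex forces a second degree-$4$ vertex in $S_2(x)$) yields exactly six graphs. The uniqueness checks for $v\in\{7,8,9,10\}$ that you flag as delicate are left at essentially the same level of detail in the paper's own proof, which additionally uses the handshake-lemma parity of the number of degree-$5$ vertices and the observation that two degree-$5$ discs must intersect when $v<12$, while your explicit exclusion of the $\chi=1$ (projective plane) case is a point the paper only treats separately in a figure caption.
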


\begin{proof}
Because the curvature is $\geq 1/6$ and the Euler characteristic can not be larger than $2$ for a 
connected $2$-graph,
the vertex cardinalities $v$ have a priori to be in the set $\{6,7,8,9,10,11,12 \}$. 
Given such a vertex cardinality $v$, the edge and face cardinalities $e=(v-2) 3$, $f=(v-2) 2$ 
are determined by Gauss-Bonnet and the Dehn-Sommerville relation $2e=3f$ holding for $2$-graphs.
The number of curvature-$(1/6)$ vertices has to be in the set $\{0,2,4,6,8,10,12\}$
because the Euler handshake formula $2e=\sum_{x \in V} {\rm deg}(x)$ implies that the total 
vertex degree is even and curvature $1/6$ vertices are the only odd degree vertices possible in a positive 
curvature $2$-graph. Note that in all cases, except for the case $v=12$,
two unit discs centered around a curvature $1/6$ vertex always intersect. 
(This holds simply by looking at the total cardinality as such a wheel graph has 6 vertices and two
disjoint discs have 12 vertices. It is only in the icosahedron case that we have two disjoint
disks with curvature $1/6$. That restricts the possibilities. 
For $v=6$, and $v=12$, we have platonic solids. For $v=7,v=8$ and $v=12,v=11$, we must have two adjacent 
vertices with different curvature. In the case $v=9$, the three degree $5$ vertices have to be 
adjacent. That fixes the structure. The vertex cardinality 11 is missing.
There is no positive curvature graph with $11$ vertices because such a graph has exactly one degree-
$4$ vertex and otherwise only degree-$5$ vertices. Look at the point $x$ with cardinality $4$ and then look 
at the spheres $S_r(x)$ around it: the sphere $S_2(x)$ of radius $2$ must have $4$ entries 
and so define an other degree-$4$ vertex.
\end{proof}

\paragraph{}
When this list was was first compiled on July 2 of 2011,  it used the built-in 
polyhedral graph libraries of degree $6-12$ in Mathematica 8. For vertex size smaller 
or equal to $9$, we then searched with brute force over $215$ suitable adjacency matrices and
then cross-referenced all two-dimensional ones for graph isomorphism.

\paragraph{}
There are always finitely many $2$-graphs with fixed vertex cardinality and so finitely
many also with non-negative curvature. How many are there? We believe there are none. 
The number can grow maximally polynomially in 
$r$ as we must chose $12$ vertices with curvature $1/6$ from $n$ or $10$ vertices of 
curvature $1/6$ and $1$ curvature $1/3$ etc or then 6 curvature 1/6 vertices.
While there are infinitely many fullerene type graphs 
with non-negative curvature, we also have only finitely many graphs of non-negative curvature which have 
no flat disc of fixed radius $r$. Also here, we do not have even estimates about the number of such graphs
depending on $r$. We only know that it must grow polynomially in $r$ because $12 \pi r^2 \geq v$ and the
number of positive curvature graphs with vertex cardinality $v$ is polynomial in $v$. 
We will in the last section comment on the case when the scalar curvature for $2$-graphs is replaced
by the Ricci curvature, which is a curvature on edges. Even so Ricci curvature does not satisfy a 
Gauss-Bonnet formula, there are only finitely many Ricci positive curvature 2-graphs but we do not know
how many there are. 

\paragraph{}
From the list of six positive curvature graphs, we see that all positive curvature graphs have 
diameter $2$ or $3$. Two of them have diameter $3$, the icosahedron with $12$ vertices, as well as 
the graph with $10$ vertices. 

\begin{figure}[!htpb]
\scalebox{0.4}{\includegraphics{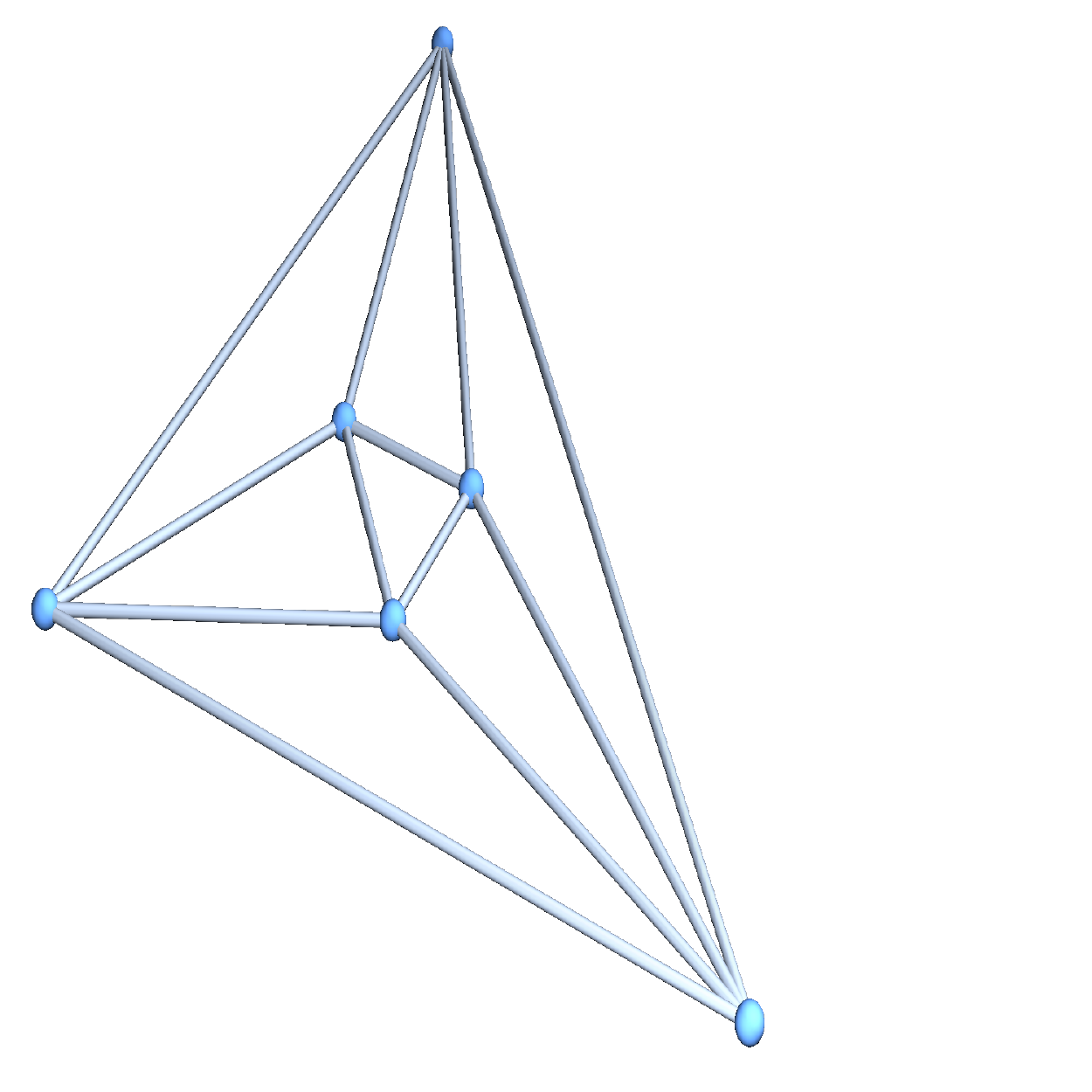}}
\scalebox{0.4}{\includegraphics{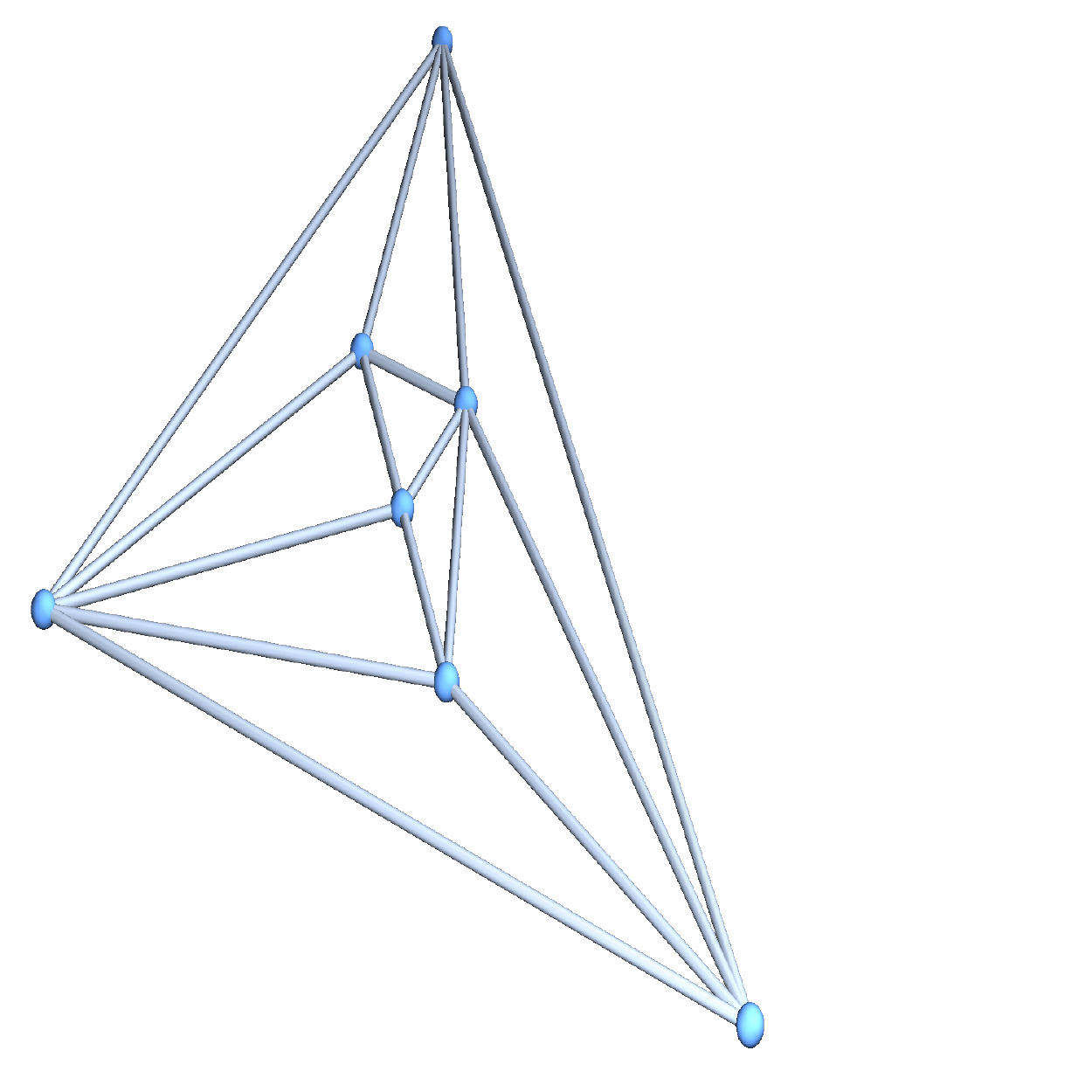}}
\scalebox{0.4}{\includegraphics{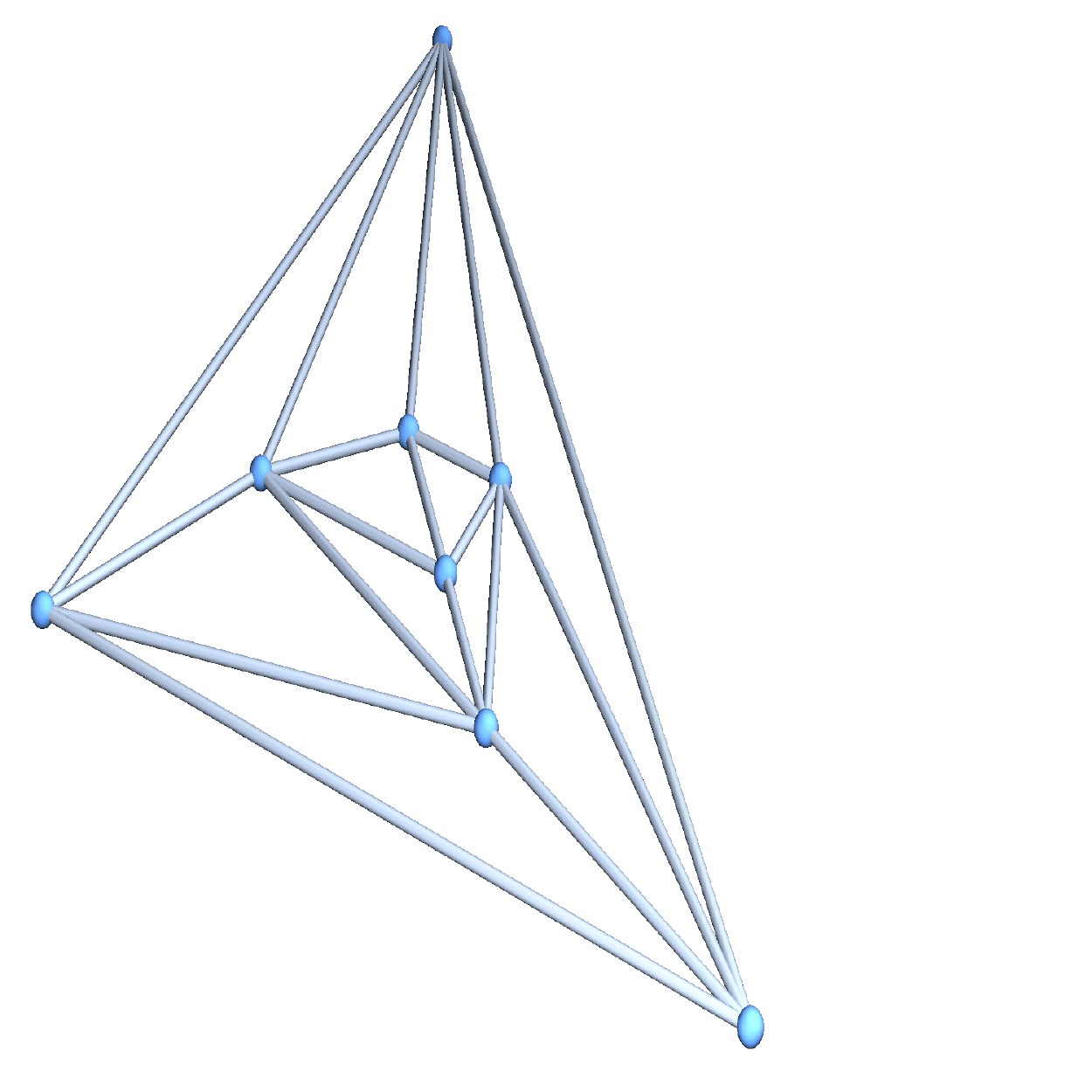}}
\scalebox{0.4}{\includegraphics{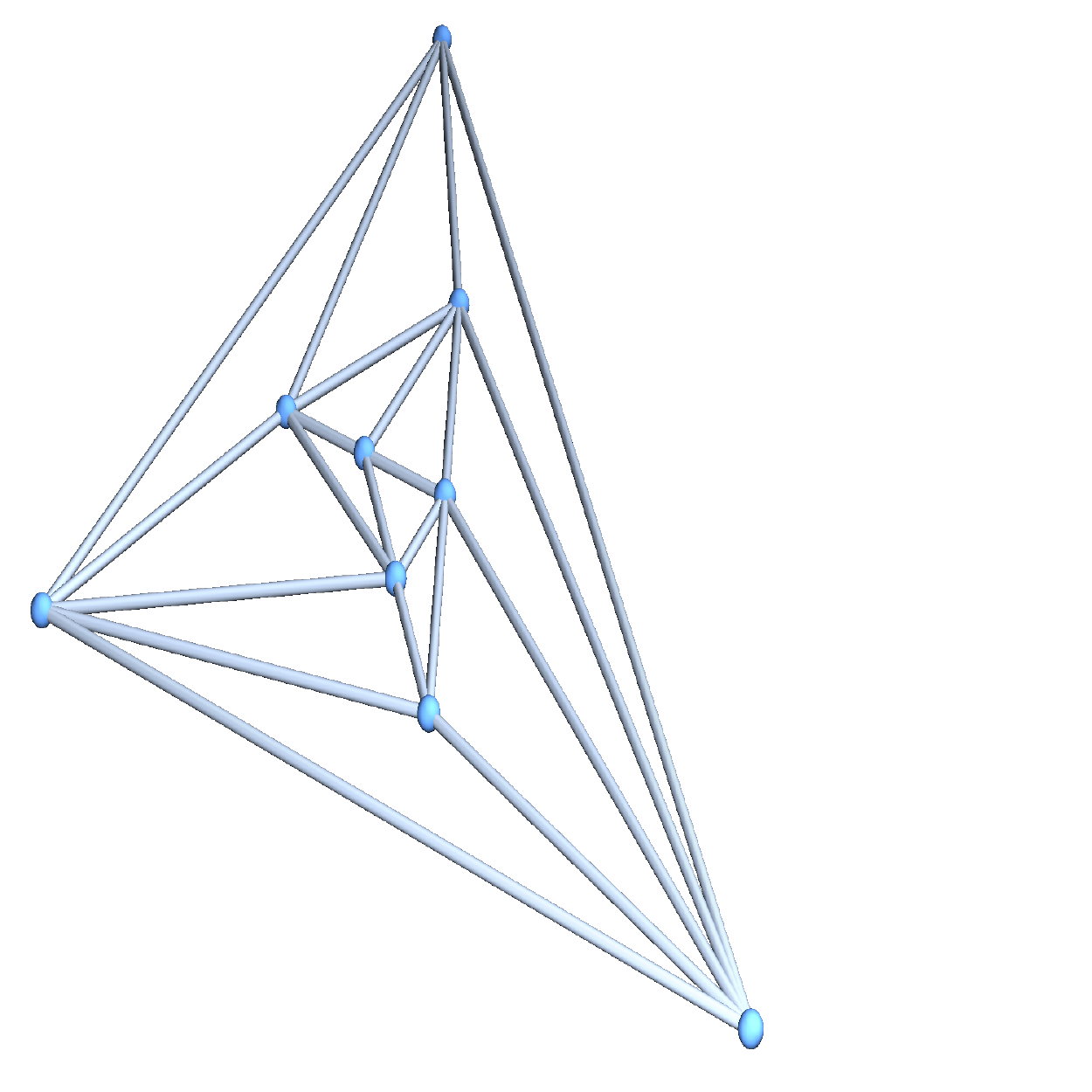}}
\scalebox{0.4}{\includegraphics{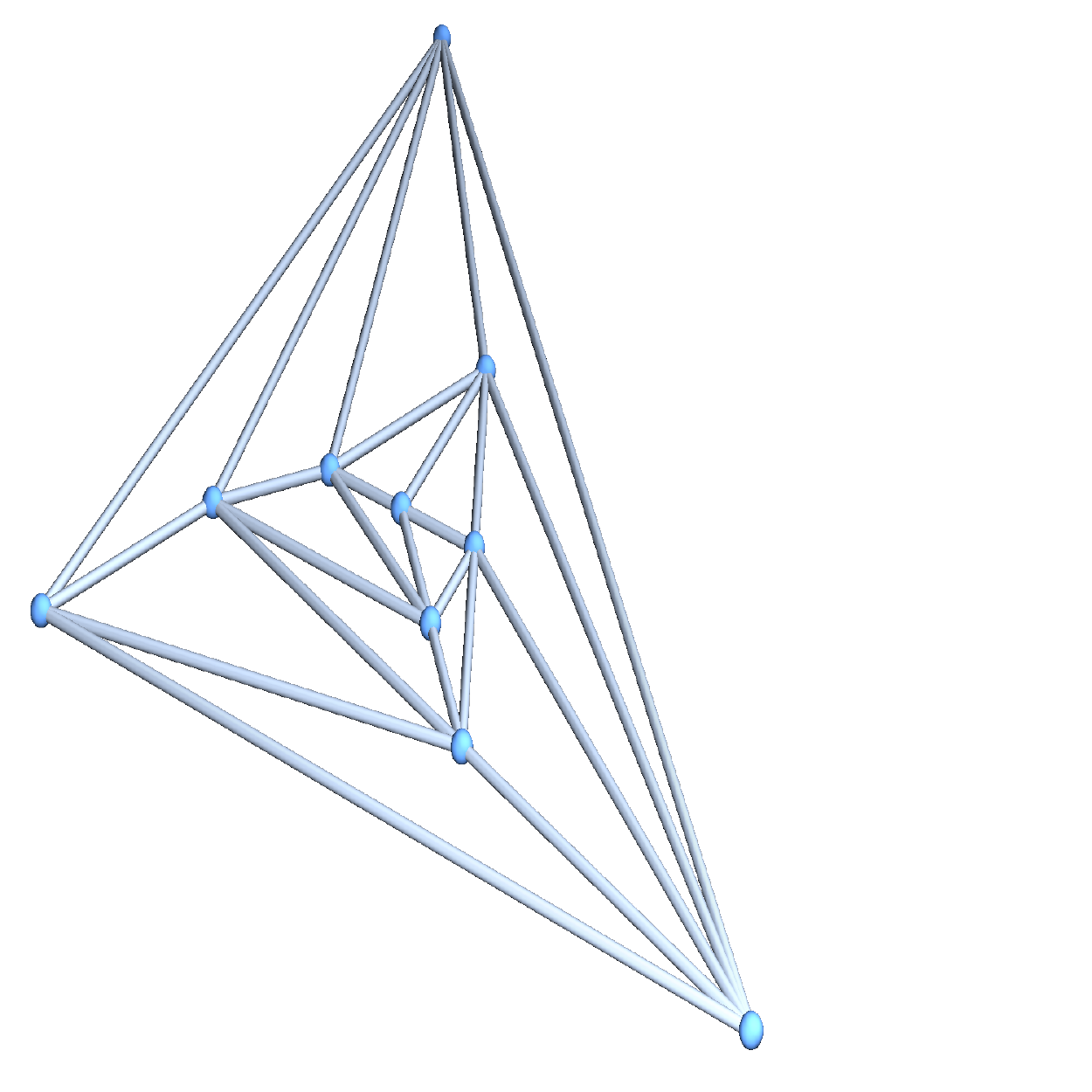}}
\scalebox{0.4}{\includegraphics{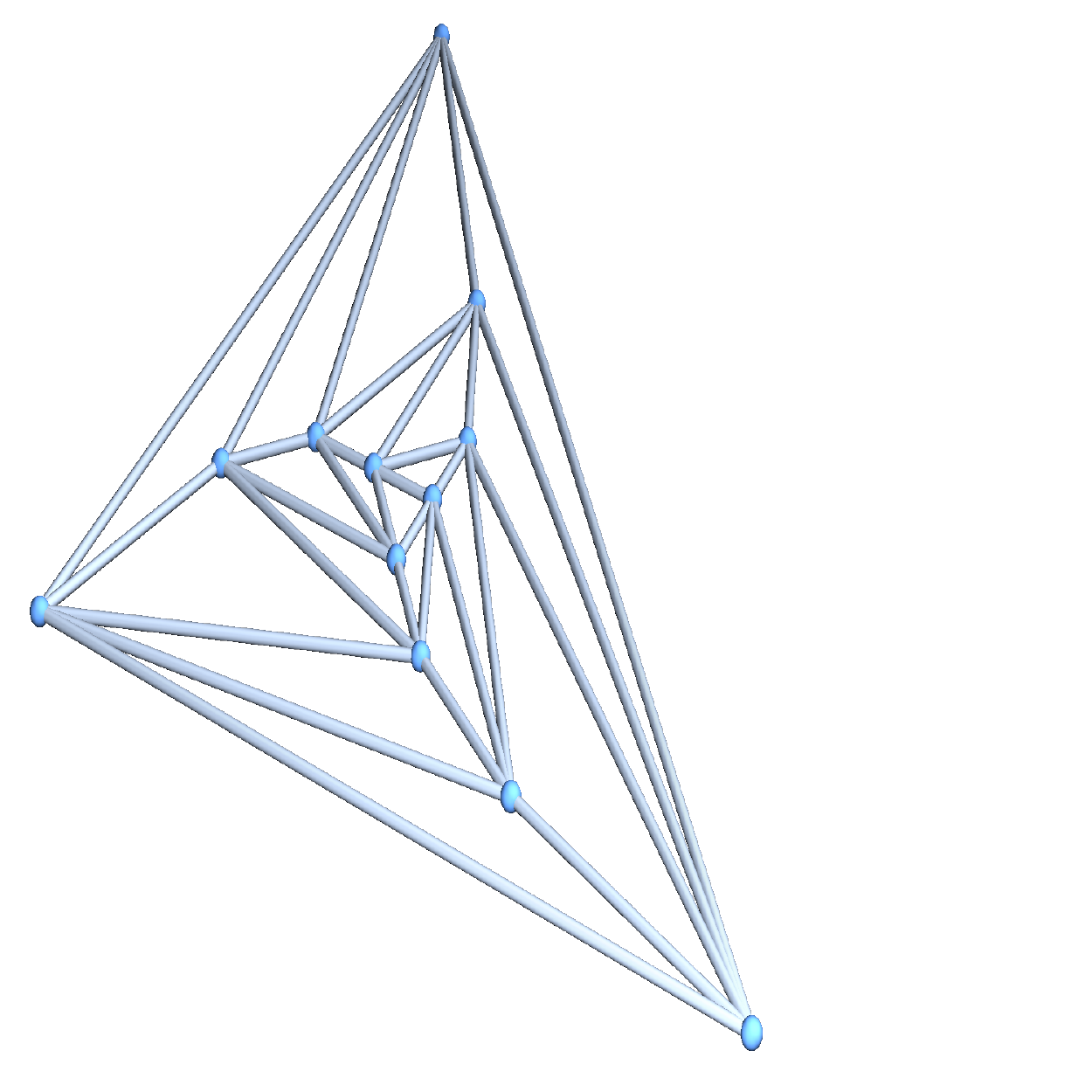}}
\label{2-graphs}
\caption{
The $6$ positive curvature $2$-graphs of dimension $2$. These are ``six
little mice".
}
\end{figure}

\begin{figure}[!htpb]
\scalebox{0.4}{\includegraphics{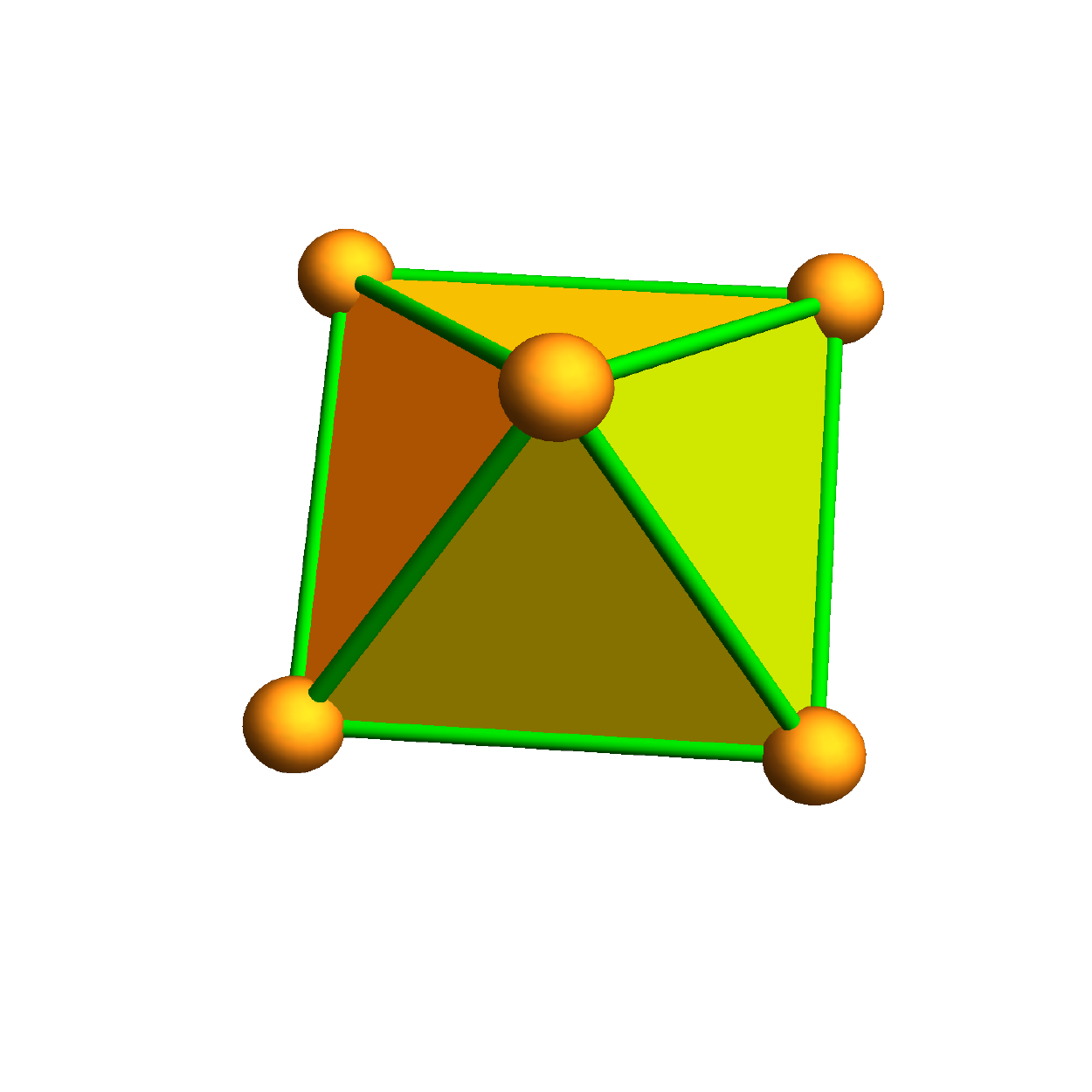}}
\scalebox{0.4}{\includegraphics{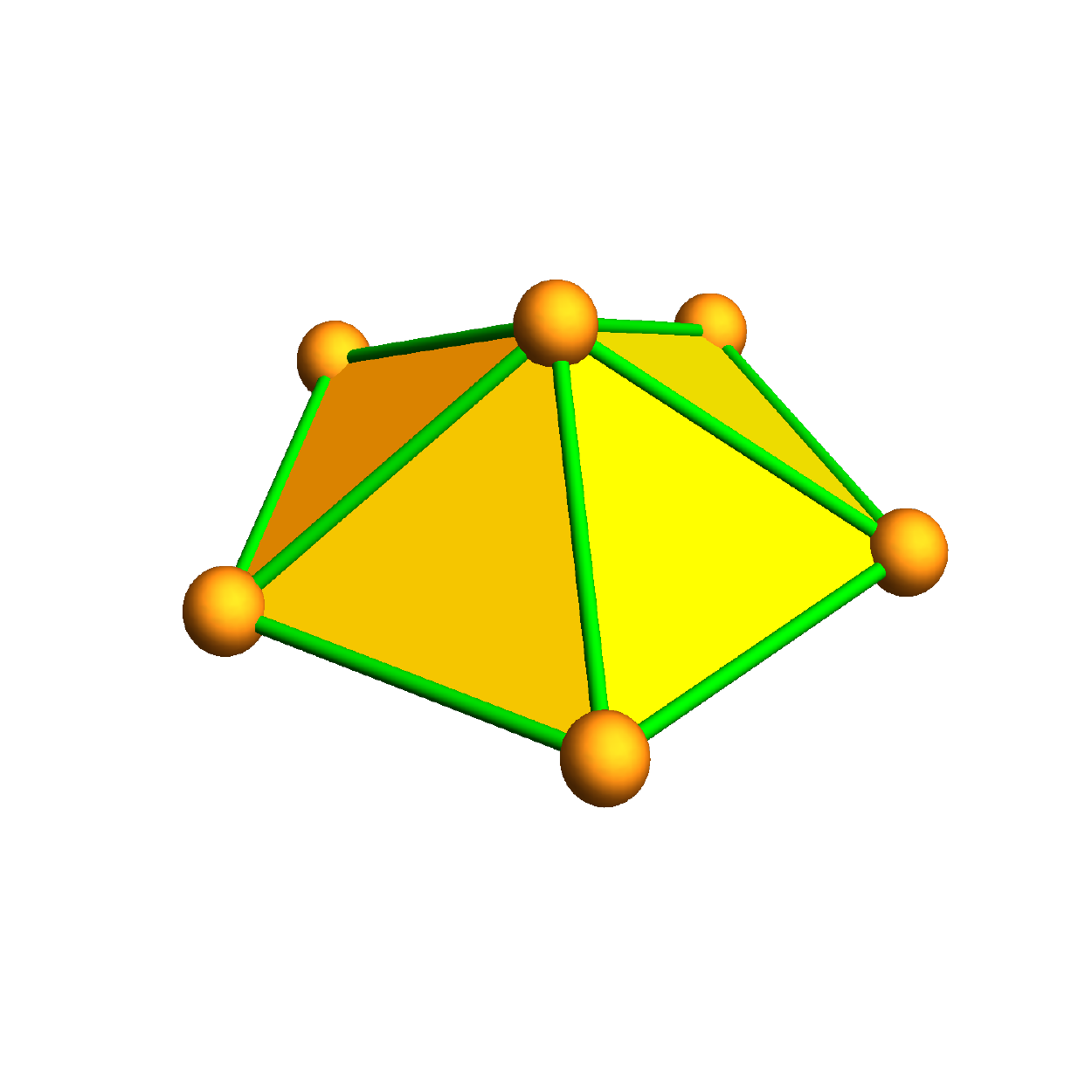}}
\scalebox{0.4}{\includegraphics{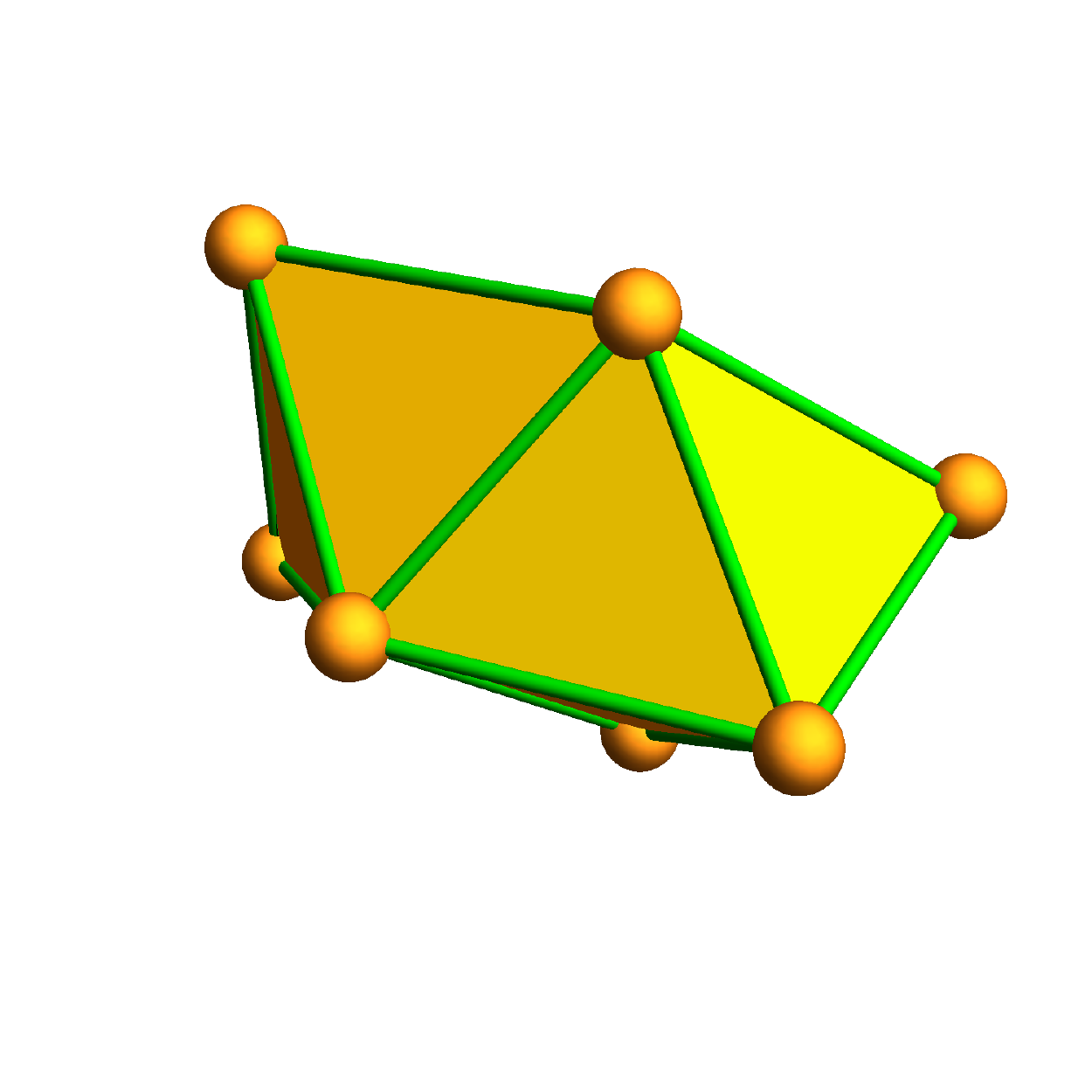}}
\scalebox{0.4}{\includegraphics{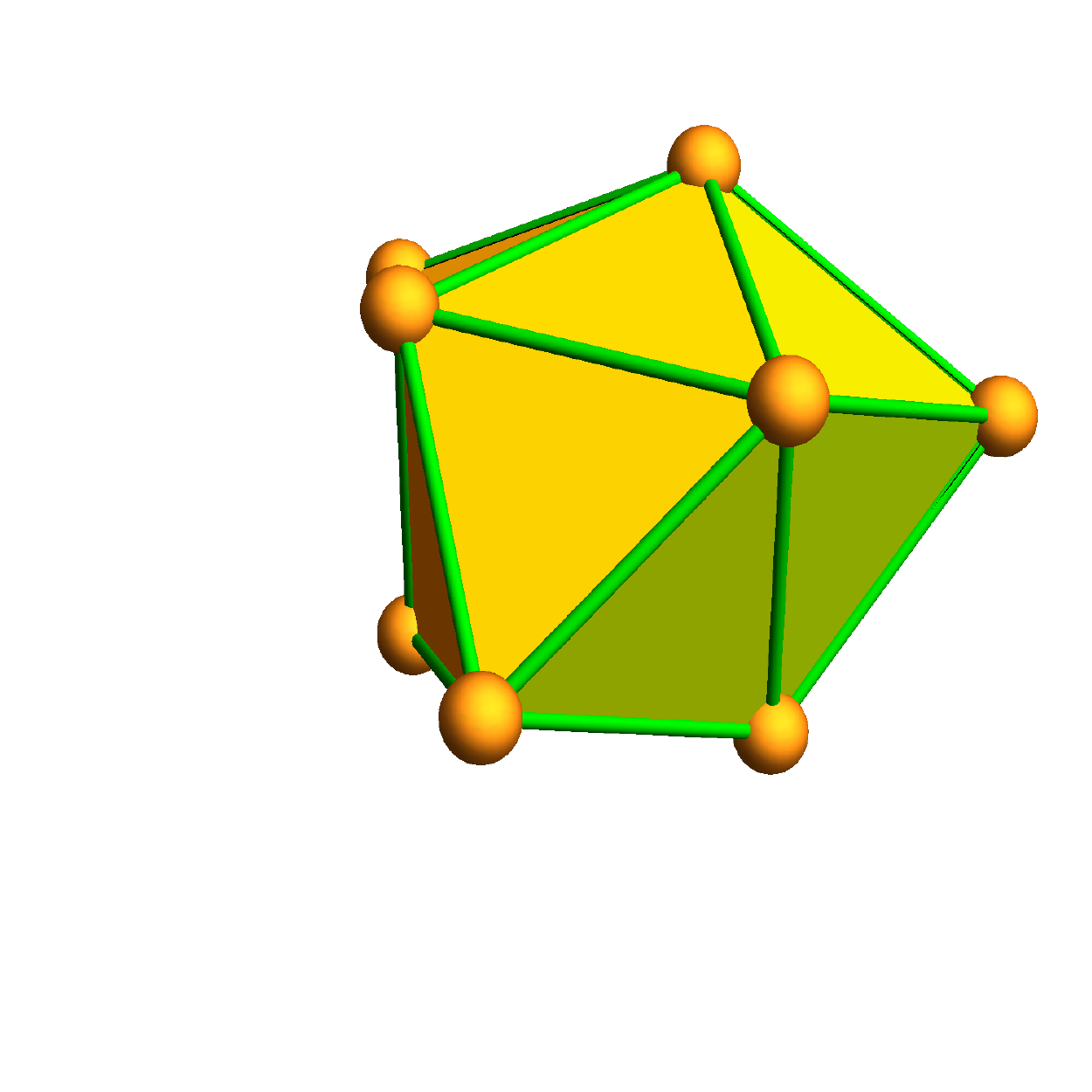}}
\scalebox{0.4}{\includegraphics{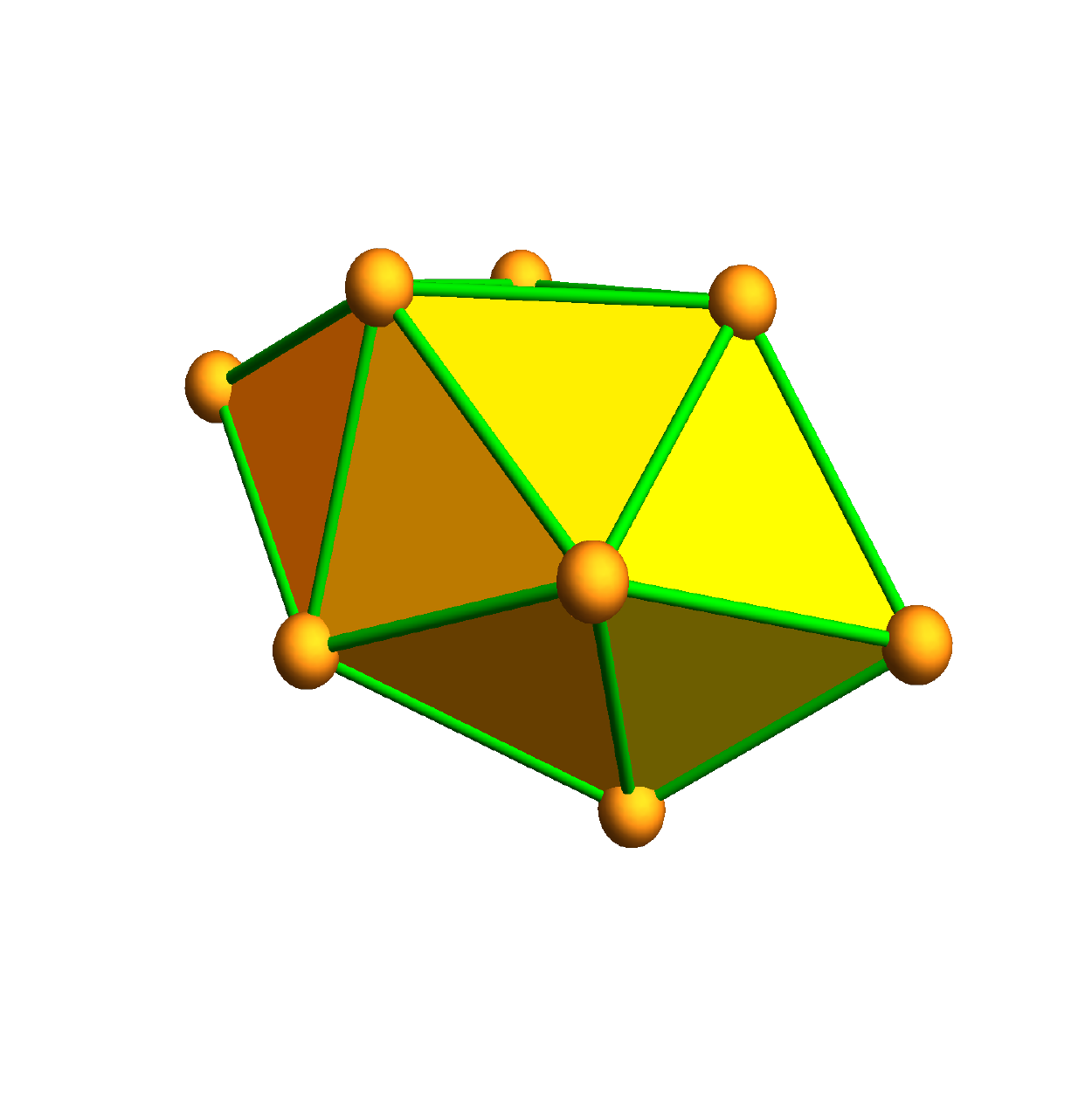}}
\scalebox{0.4}{\includegraphics{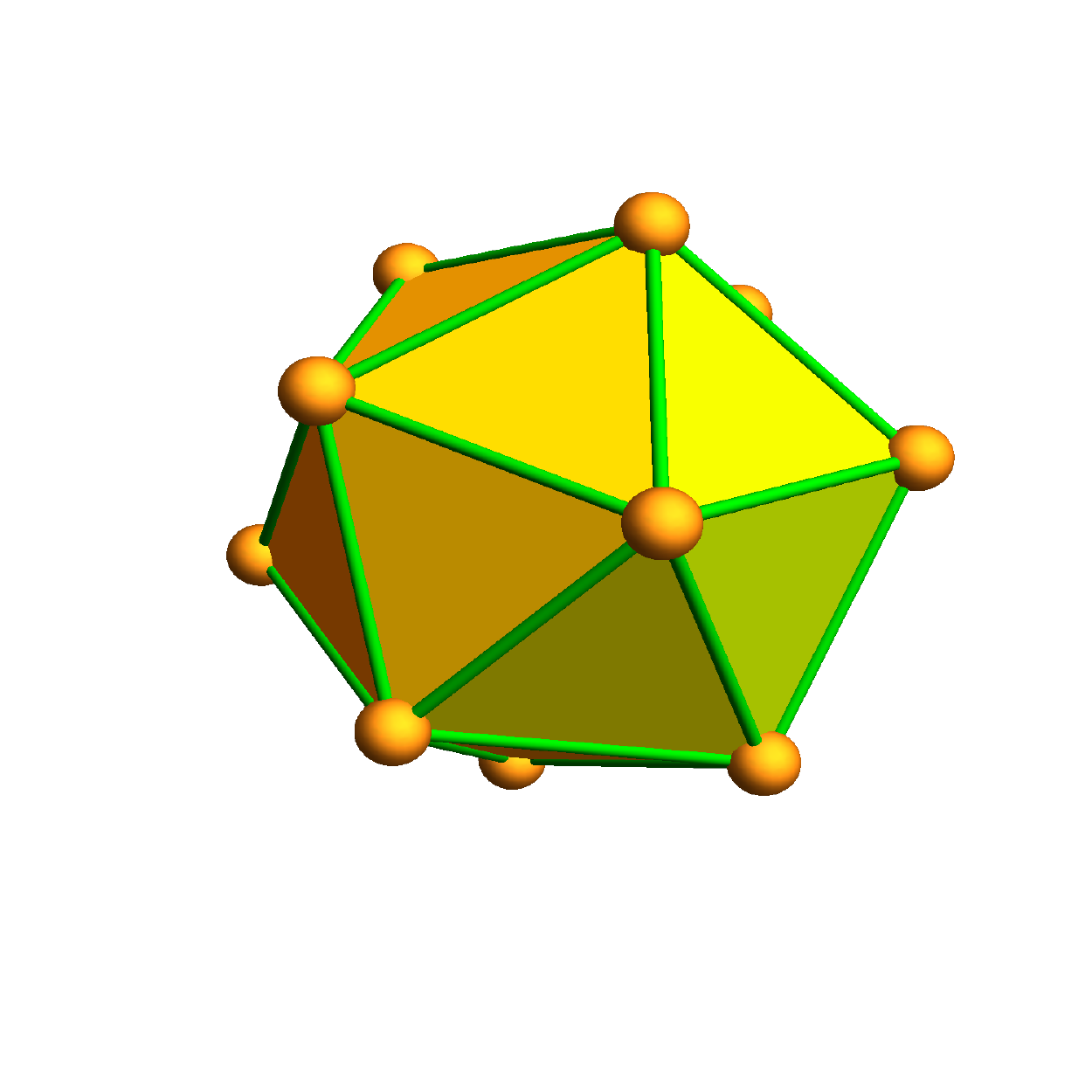}}
\label{2-graphs3D}
\caption{
The same six graphs are displayed when embedded in space and realized as polyhedra. 
Note however that the simple sphere theorem is a combinatorial result which 
does not rely on any geometric realization. Unlike in discrete
differential geometry frame works like Regge calculus, we do not care about
angles, lengths or other Euclidean notions. 
}
\end{figure}

\begin{figure}[!htpb]
\scalebox{0.9}{\includegraphics{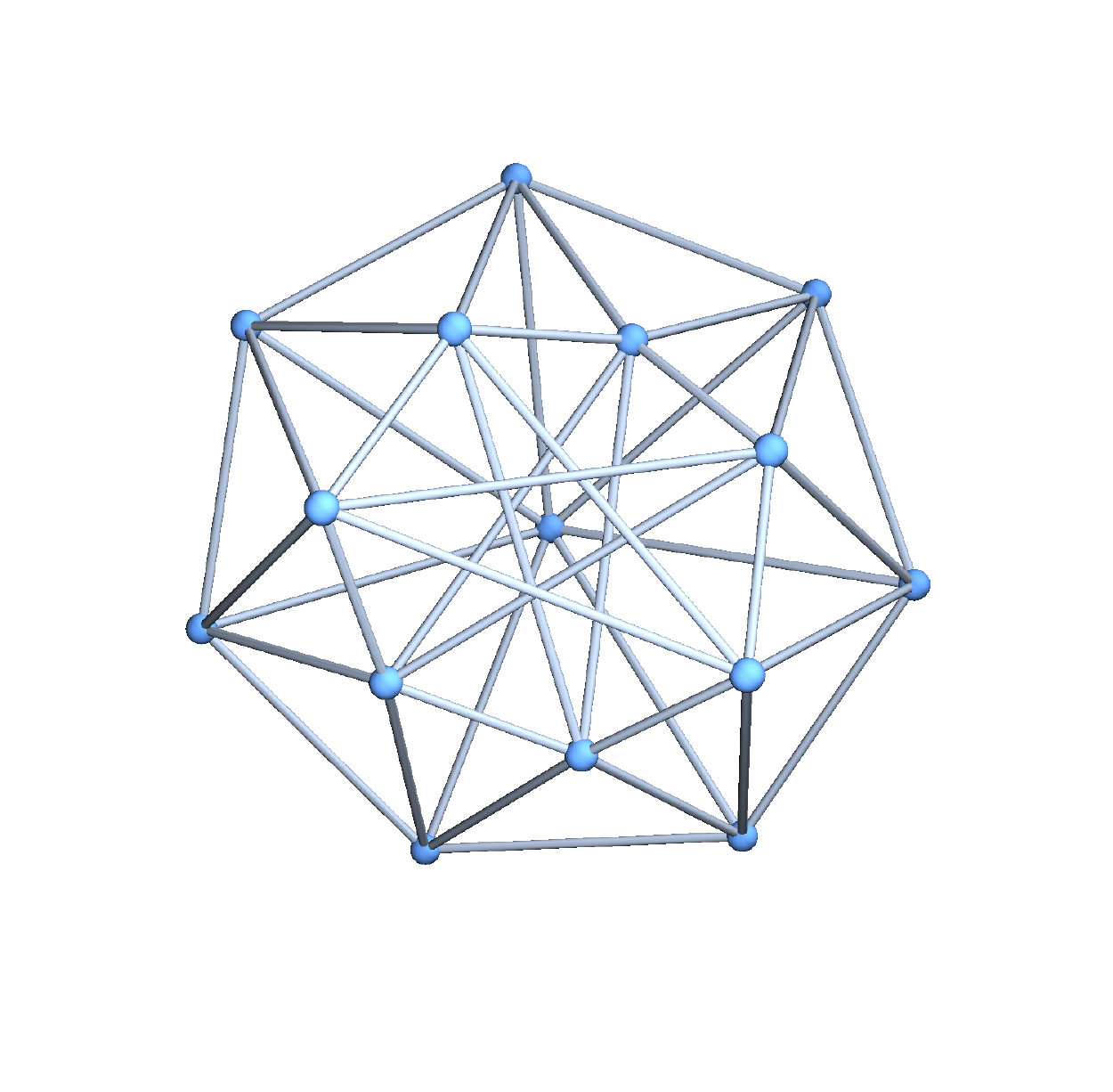}}
\label{jenny}
\caption{
This figure shows a projective plane $G$ with $15$ vertices. I learned about
this particular graph from Jenny Nitishinskaya, who constructed this graph 
as an example of a $2$-graph with chromatic number 5. This chromatic number
property is related to the fact that $G$ can not be the boundary of a simply 
connected $3$-graph \cite{knillgraphcoloring}. 
The curvature of $G$ is negative at one point. There is 
no room to implement a projective plane as a positive curvature graph: as 
the Euler characteristic of a projective plane is $1$ and the curvature is at 
last $1/6$ at every point, there can only be $6$ vertices. 
This means that the graph is a wheel graph with $5$ boundary points. 
The non-existence of positive curvature $2$-graphs which implement 
projective planes makes the orientability condition unnecessary in the 
simple sphere theorem. 
}
\end{figure}

\section{Geomag lemma}

\paragraph{}
The proof of the simple sphere theorem is elementary and constructive in any dimension. 
Given a positive curvature graph $G$, we will show that why after removing a vertex $x$, 
we have a contractible graph $G-x$. 
The reason is that if we take away a unit ball, the remaining graph is small of radius $3$ so that it 
can be covered by a ball of radius 2, for which every boundary point is covered with 
a ball of radius 1. To say it in other words, one can see that $G$ is a union of two balls.
This is known in the continuum: a $n$ dimensional smooth complete manifold which is the union of two
balls is homeomorphic to a sphere \cite{Petersen}. 
This characterization of spheres also holds in the discrete.

\paragraph{}
A graph is called a $d$-ball, if it is of the form $G-x$, where $G$ is a $d$-sphere. 
By definition, the boundary of a $d$-ball (the set of vertices $y$ where $S(y)$ is not
a $(d-1)$-sphere), is a $(d-1)$-sphere because it agrees with $S(x)$ of the original 
$d$-sphere $G$. By induction in dimension the unit sphere $S(y)$ of a boundary point $y$ 
is a $(d-1)$-ball because $S(y)+x$ is the unit sphere $S_G(y)$ in $G$. 

\begin{lemma}
A $d$-graph $G$ which is the union of two $d$-balls is a $d$-sphere.
\end{lemma}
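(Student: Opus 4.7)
My goal is to find a vertex $x\in V(G)$ such that $G-x$ is contractible, from which $G$ being a $d$-sphere follows by definition since $G$ is already a $d$-graph by hypothesis.

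First, I claim any vertex $x \in V(B_1)\setminus V(B_2)$ is automatically interior to $B_1$, meaning $S_{B_1}(x)$ is a $(d-1)$-sphere. Indeed, all edges of $G$ incident to $x$ must lie in $B_1$ (any edge belonging to $B_2$ would require $x\in V(B_2)$), so $S_G(x) = S_{B_1}(x)$, and since $G$ is a $d$-graph this is a $(d-1)$-sphere. The set $V(B_1)\setminus V(B_2)$ is nonempty: otherwise $G$ would coincide with the ball $B_2$, and the boundary vertices of $B_2$ would have $(d-1)$-ball unit spheres in $G$, contradicting the $d$-graph assumption.

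Fix such an $x$. Neighbors $y$ of $x$ in $G-x$ satisfy $S_{G-x}(y) = S_G(y)\setminus\{x\}$; since $S_G(y)$ is a $(d-1)$-sphere containing $x$, removing $x$ yields a $(d-1)$-ball, which by induction on dimension is contractible. I would then order the vertices of $V(B_1)\setminus V(B_2)$ as $x=x_1,x_2,\ldots,x_k$ by peeling $B_1$ from the interior toward the boundary piece $\partial B_1\cap V(B_2)$. At each step the vertex $x_{i+1}$ has its $G$-unit sphere contained in $B_1$, and by the peeling order this unit sphere, restricted to the current intermediate graph, is a $(d-1)$-ball rather than a full $(d-1)$-sphere and is therefore contractible. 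After the $k$ removals we are left with $B_2$, which is a $d$-ball and so contractible, and the contractibility sequence continues inside $B_2$ until only a single vertex remains.

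The main obstacle is constructing the peeling order and checking contractibility of each intermediate unit sphere. This reduces to an auxiliary collapsibility claim for $d$-balls: the interior of a $d$-ball $B$ can be exhausted in some order, each removed vertex having contractible link in the current subgraph, leaving $\partial B$ intact. I expect to prove this by a double induction on dimension $d$ and on the number of interior vertices, using that $\partial B$ is itself a $(d-1)$-sphere and that removing a vertex from a sphere gives a $(d-1)$-ball; the dimensional induction is the engine that converts the strong $d$-graph hypothesis into the local collapses needed to chain through $V(B_1)\setminus V(B_2)$.
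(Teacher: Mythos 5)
Your overall plan coincides with the one the paper uses: peel the vertices of one ball that lie outside the other down to the second ball, then contract the second ball (the paper's own proof is just as terse on the crucial step, taking a boundary vertex of $B$ and invoking the contractibility of $B$ to ``remove vertices until only $A$ is left''). However, the way you propose to fill in that step contains a genuine error. The auxiliary claim you reduce to --- that the interior of a $d$-ball $B$ can be exhausted vertex by vertex, each removed vertex having contractible link in the current subgraph, leaving $\partial B$ intact --- is false. Removing a vertex with contractible unit sphere is a homotopy step; in particular it preserves the Euler characteristic, since $\chi(G)=\chi(G-x)+1-\chi(S(x))$. A $d$-ball has $\chi=1$ while its boundary is a $(d-1)$-sphere with $\chi\in\{0,2\}$, so no such exhaustion can exist: already for $d=2$ you would be collapsing a wheel graph onto its boundary circle. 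The statement you actually need is different: the links of $x_2,\dots,x_k$ must be computed in the intermediate subgraphs of $G-x$, which contain \emph{all} of $B_2$ and not merely $\partial B_1$. It is precisely the second ball capping off these links that makes the peeling possible; the octahedron with $B_1,B_2$ the two closed hemispheres shows the difference, since the link of the north pole inside $B_1$ alone is the non-contractible equatorial circle, while $G-N$ is the contractible bottom wheel.

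What remains, then, is the entire content of the lemma: producing an ordering $x_2,\dots,x_k$ of $V(B_1)\setminus V(B_2)\setminus\{x\}$ such that each $x_{i+1}$ has contractible unit sphere in $G-x-x_2-\dots-x_{i}$, and your proposal defers this to an ``expected'' double induction whose base statement is the false claim above. The correct target of the induction should be a relative statement (peeling $V(B_1)\setminus V(B_2)$ rel $B_2$, with links taken in $G$ minus the vertices already removed), for instance by peeling concentric spheres around $x$ and showing each intermediate link is a $(d-1)$-ball; but that argument still has to be constructed, and as written the proposal does not establish the lemma. Two smaller points: your observation that every vertex of $V(B_1)\setminus V(B_2)$ is interior to $B_1$ needs the convention that the balls are induced subgraphs of $G$ (otherwise an edge between two neighbors of $x$ could live only in $B_2$, so that $S_{B_1}(x)$ would be a proper subgraph of $S_G(x)$), and the same convention is needed to identify the graph generated by $V(B_2)$ at the end of the peeling with the ball $B_2$ itself.
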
 

\begin{proof}
Let $G=A \cup B$, where $A,B$ are balls. Take a vertex $x$ at the boundary of $B$. 
By definition, as $B$ is contractible, we can remove vertices of $B$ until
none is available any more and we have only vertices of $A$ left. Now we again
by definition can remove vertices of $A$, until nothing is left. 
\end{proof} 

\paragraph{}
This is a Lusternik-Schnirelmann type result \cite{josellisknill}. 
The {\bf Lusternik-Schnirelmann category or simply category} ${\rm cat}(G)$
of a graph $G$ is the minimal number of contractible graphs needed
to cover $G$. Define ${\rm crit}(G)$ as the minimal number of critical points which an
injective function $f$ can have on $G$. The Lusternik-Schnirelmann theorem is
${\rm cat}(G) \leq {\rm crit}(G)$. Especially, $d$-spheres can be characterized as
as the $d$-graphs with category $2$. Related is the
Reeb sphere theorem which tells that $d$-graphs which admit a function with 
two critical points is a sphere \cite{knillreeb}. This implies the 2-ball theorem. 

\paragraph{}
For a $d$-graph $G$, and two vertices $x,y$, a curve $C$ connecting $x$ with $y$ is called
a {\bf geodesic arc} if there is no shorter curve in $G$ connecting $x$ and $y$. A {\bf closed curve}
$C$ in $G$ is called a {\bf geodesic loop} if for any two vertices $x,y$ in $C$, there is a 
geodesic arc from $x,y$ which is contained in $C$. 

\paragraph{}
The key is the following {\bf geomag lemma}: 

\begin{lemma}[Geomag]
a) Given any $2$-dimensional surface $S$ with boundary embedded in a $2$-graph $G$ and a point $x$ 
on the boundary $S$, there exists a wheel graph centered at $x$ which extends the surface.
b) Given a geodesic arc $C$ from $x$ to $y$, there exists a two-dimensional embedded 
surface $S$ (a 2-graph with boundary) which contains $C$.  \\
\end{lemma}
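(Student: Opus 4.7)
The plan is to first establish (a) by a direct local analysis at a boundary vertex, and then to derive (b) by iterating (a) starting from the geodesic $C$.

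For part (a), let $x$ be a boundary vertex of $S$. Because $G$ is a $2$-graph, the unit sphere $S_G(x)$ is a cycle graph; positive curvature forces $\deg(x) \in \{4,5\}$, so this cycle has length $4$ or $5$. Since $x$ lies on the boundary of $S$, the link $S_S(x)$ is a proper sub-path of $S_G(x)$ with two endpoints $a$ and $b$. Consider the wheel graph $W_x = B_G(x)$, which in a $2$-graph is the unique wheel centered at $x$. Adjoining to $S$ the complementary arc of $S_G(x)$ from $a$ to $b$, together with $x$ and its edges, yields a larger embedded $2$-dimensional surface with boundary in which $x$ has become an interior vertex. This $W_x$ is the required wheel extending $S$ at $x$; the positive curvature bound is what guarantees $W_x$ is itself an admissible small wheel.

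For part (b), I build $S$ inductively starting from $C = x_0 x_1 \cdots x_m$. Initialize $S_0$ by picking any triangle of $G$ containing the edge $x_0 x_1$; such a triangle exists because every edge of a $2$-graph lies in exactly two triangles. At each subsequent stage, select a boundary vertex $v$ of the current surface $S_n$ whose link $S_{S_n}(v)$ fails to cover $S_G(v)$ and whose completion would bring more of $C$ into the surface, and apply (a) to set $S_{n+1} = S_n \cup W_v$. Each wheel $W_v$ adds at most four new vertices, and $V(G)$ is finite, so the process terminates. The resulting $S$ is a $2$-graph with boundary that contains $C$.

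The principal obstacle is verifying that the iterative construction preserves the embedded-surface structure: in principle, two wheels attached at nearby boundary vertices could glue along a non-arc or meet at a pinch point, spoiling the manifold property. This is where positive curvature is essential. The restriction to wheels with at most $5$ boundary vertices prevents the local configuration from having enough room to wrap around and re-enter $S$ non-trivially, which would produce a projective-plane-like identification of the sort explicitly ruled out by the remark that no positive-curvature $2$-graph implements $\mathbb{RP}^2$. Combined with the geodesic hypothesis on $C$, which forbids shortcuts between its vertices and thus forces the wheels at consecutive vertices of $C$ to sit compatibly, one obtains an honest $2$-graph with boundary threading every geodesic arc.
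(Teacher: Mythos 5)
Your overall strategy --- extend the surface one wheel at a time along the geodesic --- is the same as the paper's, but there is a substantive gap in part (a). You take the hypothesis ``$2$-graph $G$'' literally, so that $S_G(x)$ is a cycle and $B_G(x)$ is \emph{the unique} wheel at $x$; in that setting the lemma is essentially vacuous, since there is nothing to construct. The paper's own proof, and every subsequent application (extending a geodesic in a positive curvature $d$-graph to an immersed $2$-sphere, hence the diameter bound), treats $G$ as a $d$-graph: there $S(x)$ is a $(d-1)$-sphere, there are many wheels centered at $x$, and the entire content of (a) is the \emph{selection} of a circle inside $S(x)$ that contains the arc $S(x)\cap S$ with endpoints $A,B$. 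The paper obtains this from the auxiliary circle-completion lemma (delete the unit neighborhood of the interior of the arc from $S(x)$ and take a geodesic from $A$ to $B$ in what remains). This selection step has no counterpart in your argument, so your proof does not deliver the lemma in the form the paper actually uses it. Relatedly, you import positive curvature as if it were a hypothesis ($\deg(x)\in\{4,5\}$, ``admissible small wheel''); the lemma assumes no curvature condition and the construction does not need one --- curvature enters only afterwards, to bound the diameter of the resulting surface.

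Two smaller points. In (b) your initialization by ``any triangle on $x_0x_1$'' and the rule ``pick a boundary vertex whose completion brings more of $C$ into the surface'' leaves it unclear why the process must eventually absorb all of $C$; the paper instead centers the successive wheels at the vertices of $C$ itself, which makes the progress measure explicit. Finally, you are right that the real difficulty is showing the union of wheels stays an embedded (or at least immersed) $2$-graph with boundary, but your resolution --- that small wheels ``cannot wrap around'' --- is an appeal to the projective-plane remark, not an argument; to be fair, the paper's own proof is equally silent on this verification.
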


\begin{proof}
a) Let $x$ be a boundary point of the surface $S$, (a boundary point is a point where $S(x) \cap S$ 
is an arc on the $(d-1)$-sphere $S(x)$ and not a circle). 
We can now build a geodesic on $S(x)$ connecting the arc $S(x) \cap S$ but which is disjoint 
from the arc $AB$ (see the lemma below). 
This circle completion extends the surface $S$ to a larger surface by including the ``magnets"
from the arc $S(x) \cap S$. \\
b) Start with the boundary point $x$ of the geodesic arc $C$ and build a wheel graph $H$ centered at 
$x$. Now extend the surface as in part $a)$ at $H \cap C$. Continue extending the surface until 
reaching $y$. Now, we have a two-dimensional surface $S$ with boundary.  
\end{proof}

\paragraph{}
Here is an other lemma which is inductively used to extend a surface. 

\begin{lemma}
Given two vertices $x,y$ in a $d$-sphere $G$ and a geodesic arc $xy$, 
then there this arc can be extended to a circle $C$ in $G$ (a circle in $G$ 
is an embedded $1$-graph in $G$).
\end{lemma}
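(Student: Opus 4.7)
The plan is a double induction, first on the dimension $d$ and then on the length $n$ of the geodesic arc $\gamma : x = x_0 \sim x_1 \sim \cdots \sim x_n = y$. The key structural input is the edge link: for adjacent vertices $u, v$ in a $d$-graph, the link $S(u) \cap S(v)$ equals the unit sphere of $v$ inside the $(d-1)$-sphere $S(u)$, hence is itself a $(d-2)$-sphere by the inductive definition.

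For $d = 1$, the graph $G$ is already a cyclic graph and one takes $C = G$. So assume $d \geq 2$. For the base case $n = 1$, the arc is a single edge $xy$, and I consider the link $L := S(x) \cap S(y)$, a $(d-2)$-sphere. Every sphere of nonnegative dimension contains a pair of non-adjacent vertices: the $0$-sphere by definition consists of two isolated points, and every higher sphere has diameter at least $2$. Choose such a pair $a, b \in L$. Then $\{x, a, y, b\}$ generates the induced $4$-cycle $x \sim a \sim y \sim b \sim x$, an embedded $1$-graph extending the edge $xy$.

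For the inductive step $n \geq 2$, I apply the base case to the terminal edge $x_{n-1} y$, producing a $4$-cycle $(x_{n-1}, a, y, b)$, and then apply the inductive hypothesis on $n$ to the shorter geodesic subarc $x_0 \cdots x_{n-1}$, obtaining an induced cycle $C'$ containing it. The desired cycle $C$ is built by splicing $C'$ with the $4$-cycle at $x_{n-1}$: one deletes the step of $C'$ immediately past $x_{n-1}$ and replaces it with the detour $x_{n-1} \sim a \sim y \sim b$, before rejoining the return portion of $C'$ and walking back to $x_0$. The result is a closed walk through the entire arc $\gamma$ from which an induced cycle through $\gamma$ can be extracted.

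The main obstacle is ensuring the resulting cycle is genuinely an embedded $1$-graph, i.e., chordless. Chords within the arc are impossible because $\gamma$ is a geodesic; chords within the inductively built portion are impossible by the inductive hypothesis; the delicate case is possible chords between the newly introduced vertices $a, b$ and older vertices of $C'$. This is controlled by exploiting the freedom in the base step: the $(d-2)$-sphere $L$ has many non-adjacent pairs, so one can choose $a, b$ to avoid the small $1$-neighborhoods of the far vertices of $C'$, using that those far vertices are at graph distance at least $2$ from the edge $x_{n-1} y$ by the combined geodesic property of $\gamma$ and the induced property of $C'$. Making this avoidance argument uniform across all dimensions $d \geq 2$ — in particular, verifying that $L$ always has enough room to support a suitable choice of $a, b$ — is the technical heart of the argument.
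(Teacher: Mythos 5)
Your base case is incorrect as stated. If the arc is the single edge $xy$, then $x$ and $y$ are adjacent in $G$, so the set $\{x,a,y,b\}$ with $a,b \in S(x)\cap S(y)$ does \emph{not} generate a $4$-cycle: the induced subgraph contains the fifth edge $xy$ and is $K_4$ minus the edge $ab$, which is not an embedded $1$-graph. Worse, the cycle $x\sim a\sim y\sim b\sim x$ does not traverse the edge $xy$ at all, so even if it were chordless it would not \emph{extend} the arc; the edge $xy$ would merely be a chord of it. The correct local move must place the two new vertices outside the common link, e.g.\ one in $S(y)$ but not in the closed ball of $x$ and one in $S(x)$ but not in the closed ball of $y$ (in the octahedron, the edge $13$ extends to the induced cycle $1\sim 3\sim 2\sim 4\sim 1$ with $2,4$ the respective antipodes, not two vertices of $S(1)\cap S(3)$). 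Since the inductive step is built by splicing in copies of this base configuration, the error propagates through the whole construction.

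Beyond that, two further steps do not close. First, in the splice you never establish that $b$ is adjacent to anything on the return portion of $C'$; you only obtain a closed walk, and ``extracting an induced cycle'' from a closed walk by shortcutting can delete interior vertices of $\gamma$, destroying exactly the property you are trying to preserve. Second, the chord-avoidance argument leans on the link $L$ having ``many non-adjacent pairs'' from which to choose, but for $d=2$ the link $S(u)\cap S(v)$ of an edge is a $0$-sphere consisting of exactly two vertices, so there is no freedom whatsoever; you flag this as the technical heart and it is precisely where the argument has no content in the lowest nontrivial dimension. For comparison, the paper avoids all of this local bookkeeping with a global argument: delete the union $N$ of unit balls centered at the interior vertices of the arc and take a geodesic from $x$ to $y$ in $G-N$; by construction the returning path cannot touch the arc except at its endpoints, and its own geodesic property rules out chords along it. Your link-based induction could in principle be repaired, but it would need a correct base configuration and a genuine argument that the new vertices can be chosen non-adjacent to the previously built cycle.
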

\begin{proof}
We remove a distance $1$ neighborhood $N$ of non-boundary points of $xy$ in $G$. 
The graph $N$ can be constructed as the union of all unit-balls centered at vertices
of $xy$ different from $\{x,y\}$. Now just take a geodesic in the graph $G-N$. 
It completes the graph and does not touch $xy$ in any place different from 
$\{x,y\}$. 
\end{proof}

\paragraph{}
The example of the octahedron graph $G$ and two an arc connecting two antipodal
vertices $x,y$ in $G$ is a situation, where the circle completion is unique. 

\paragraph{}
A consequence is:

\begin{coro}[Loop extension]
Any geodesic loop is part of a $2$-graph $S$ which is
immersed in $G$.
\end{coro}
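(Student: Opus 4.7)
The plan is to imitate the construction from part (b) of the geomag lemma, but to close up around the loop rather than halt at a terminal vertex. Label the loop as $C = x_0 x_1 \cdots x_{n-1} x_0$, with indices modulo $n$. First I would produce a starting wheel graph $W_0$ in $G$ centered at $x_0$ by taking a geodesic arc from $x_{n-1}$ to $x_1$ inside the $(d-1)$-sphere $S(x_0)$ and applying the circle-extension lemma to close it into a circle; coning over $x_0$ gives a wheel that already contains the two edges $x_{n-1} x_0$ and $x_0 x_1$ of $C$, and this will serve as the initial surface $S_0$.

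Then I would traverse $C$ in order. Suppose a partial $2$-surface $S_i$ has been built with wheels $W_0, \ldots, W_i$ attached and $x_{i+1}$ on its boundary; the intersection $S(x_{i+1}) \cap S_i$ is an arc in $S(x_{i+1})$, containing the center $x_i$ of $W_i$ together with the two vertices of the wheel boundary of $W_i$ adjacent to $x_{i+1}$. I would apply part (a) of the geomag lemma at $x_{i+1}$, arranged so that the newly attached wheel passes through $x_{i+2}$: enlarge the existing arc inside $S(x_{i+1})$ to include $x_{i+2}$, using connectedness of $S(x_{i+1})$ minus the interior of the arc, and then complete the enlarged arc to a circle by the circle-extension lemma. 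This attaches a wheel $W_{i+1}$, makes $x_{i+1}$ interior, and propagates a new boundary arc at $x_{i+2}$. After $n-1$ iterations, the wheels $W_0, W_1, \ldots, W_{n-1}$ are in place and every edge of $C$ lies in the surface.

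The step I expect to be the main obstacle is the final closure at $x_0$. After all the iterations, the arc $S(x_0) \cap (W_{n-1} \cup W_0)$ generally fails to be a circle: the end of the chain arriving through $W_{n-1}$ on one side of $x_0$ and the portion coming from $W_0$ on the other side have no reason to match as subgraphs of $G$, and no choices made along the way can be forced to reconcile them inside $G$. This is precisely why the corollary asserts immersion rather than embedding. To conclude, I would build $S$ abstractly as the simplicial complex obtained by gluing the $W_i$ along the common edges $x_{i-1} x_i$ of $C$ and, at $x_0$, formally identifying the two ends of the chain along $x_0$ itself, yielding a $2$-graph without boundary containing $C$. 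The natural map $\iota : S \to G$ is injective on each $W_i$, hence locally injective on the star of every vertex of $S$, and is therefore an immersion of a $2$-graph into $G$ whose image contains all of $C$.
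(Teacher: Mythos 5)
The paper offers no proof of this corollary at all---it is stated as an immediate consequence of the geomag lemma---so your reconstruction is being judged on its own merits. Your overall strategy (iterate part (a) of the geomag lemma around the loop, and pass to an abstract gluing with a locally injective map to explain why the conclusion is ``immersed'' rather than ``embedded'') is the right reading of the paper's intent, and your identification of the closure problem at $x_0$ as the source of the word ``immersed'' is a genuine insight that the paper leaves implicit.

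However, there is a concrete gap at the end: the complex you build by gluing $W_0,\dots,W_{n-1}$ along the edges of $C$ is \emph{not} a $2$-graph. In the paper's terminology a $2$-graph has every unit sphere equal to a circle, i.e.\ it is a closed surface. Your glued object is an annular (or M\"obius) band of wheels around $C$: the centers $x_i$ have circular links, but every vertex on the outer rim of the band has a link which is an arc, so these are boundary points and $S$ is only a $2$-graph \emph{with boundary}. The corollary (and its use in the Synge argument, where $S$ must be a $2$-sphere) requires a closed surface. To finish, you must continue applying part (a) of the geomag lemma at the remaining boundary vertices of the band until no boundary is left, and you must argue that this process terminates: each extension step makes one boundary vertex interior and a vertex, once interior, stays interior, so in the finite graph $G$ (and, under positive curvature, with all wheel sizes bounded by $6$ so that Gauss--Bonnet caps the closed surface at $12$ vertices) the extension stops only when $S$ has closed up into a $2$-graph. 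Without this second phase your $S$ does not satisfy the conclusion as stated.
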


\paragraph{}
More importantly, we have Bonnet-Myers theorem type result which bounds the diameter
of positive curvature $d$-graphs: 

\begin{coro}
The diameter of any positive curvature graph is $\leq 3$.
\end{coro}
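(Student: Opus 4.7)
The plan is to argue by contradiction, using the geomag lemma to trap any putative long geodesic inside a two-dimensional sphere of small diameter. Suppose toward contradiction that $G$ is a positive curvature $d$-graph containing two vertices $x, y$ with $d_G(x,y) \geq 4$, and pick a geodesic arc $C$ realizing this distance. By part (b) of the Geomag Lemma, $C$ is contained in an embedded $2$-dimensional surface-with-boundary $S_0$ inside $G$.

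Next I would close $S_0$ up to a boundaryless surface. At every boundary vertex $z$ of the current surface, part (a) of the Geomag Lemma supplies a wheel graph centered at $z$ whose addition enlarges the surface while keeping it embedded (or immersed). Because $G$ is finite, this wheel-completion procedure must terminate after finitely many steps in a $2$-graph $S \subseteq G$ with no boundary, which still contains the original geodesic arc $C$ from $x$ to $y$. I then want to show that $S$ is itself of positive curvature, so that the classification lemma applies: for every vertex $v$ of $S$, the wheel graph at $v$ in $S$ embeds as a wheel in $G$, and the positive curvature hypothesis on $G$ says every such embedded wheel has at most $6$ boundary vertices. Hence $S$ is a connected positive curvature $2$-graph.

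By the preceding lemma, $S$ is one of the six positive curvature $2$-graphs, all of which have diameter $\leq 3$. Since $x, y \in S$ and $S$ is a subgraph of $G$, we have $d_G(x,y) \leq d_S(x,y) \leq \mathrm{diam}(S) \leq 3$, contradicting $d_G(x,y) \geq 4$.

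The main obstacle is the middle step: verifying that the iterated wheel-completion terminates cleanly with a closed $2$-graph $S$ that is genuinely embedded (or at least immersed in a way that preserves the positive-curvature bound), so that wheels inside $S$ really do lift to embedded wheels inside $G$. Once this transfer of the curvature hypothesis from $G$ to $S$ is justified, the conclusion is immediate from the finite classification of positive curvature $2$-graphs, all of which happen to have diameter at most $3$.
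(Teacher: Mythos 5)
Your proposal follows essentially the same route the paper intends: the paper gives no separate proof of this corollary, but its stated strategy (in the abstract and Section 1) is exactly your argument --- extend a geodesic arc via the geomag lemma to an immersed positive curvature $2$-graph $S$, invoke the classification of the six such graphs, all of diameter $\leq 3$, and conclude $d_G(x,y) \leq d_S(x,y) \leq 3$. You also correctly flag the same weak point the paper glosses over, namely that the wheel-completion terminates in a closed $2$-graph whose embedded wheels transfer the curvature bound from $G$.
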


\paragraph{}
There is no obvious analogue of the classical Cheng rigidity theorem 
characterizing positive curvature manifolds with maximal diameter as ``round spheres".
In the graph case, a maximal diameter $3$ positive curvature graph can already 
in dimension $2$ lead to different graphs. There are exactly two positive curvature 
$2$-graphs with maximal diameter $3$. And they are not-isometric $2$-spheres. 

\paragraph{}
A $d$-graph is {\bf simply connected} if every closed path $C$ in $G$ can be deformed to a point.
A {\bf deformation step replaces} two edges in $C$ contained in a triangle with the third edge in the
triangle ($2$-simplex) or then does the reverse, replaces an edge with the complement of a triangle.
This is equivalent to the continuum. We can define an addition of equivalence classes of
closed curves and get $\pi_1(G)$, the fundamental group. It is the same graph as when looking at the
classical fundamental group of a geometric realization but we do not need a geometric realization). 
A $d$-graph is simply connected, if the fundamental group is the trivial group. 

\begin{coro}[Synge]
Every $d$-graph of positive curvature is simply connected.
\end{coro}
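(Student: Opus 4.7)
The plan is to reduce simple connectedness to the classification of positive curvature $2$-graphs via the geomag lemma. Let $C$ be an arbitrary closed curve in $G$; I want to show $C$ is null-homotopic. Among all closed curves in the homotopy class of $C$, I would pick one of minimal combinatorial length, call it $C'$. Such a minimizer must be a geodesic loop, because if some sub-arc of $C'$ were not length-minimizing between its endpoints, a triangle-move shortening (which only uses deformation steps through filled $2$-simplices) would produce a shorter curve in the same homotopy class, contradicting minimality.

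Next I would invoke the loop extension corollary: the geodesic loop $C'$ sits inside a $2$-graph $S$ that is immersed in $G$. The key observation is that $S$ inherits positive curvature from $G$. Indeed, because the immersion is locally injective, every wheel graph around a vertex of $S$ maps isomorphically onto a wheel graph embedded in $G$, and the latter has $5$ or $6$ boundary vertices by hypothesis on $G$. Hence $S$ is itself a positive curvature $2$-graph. By the classification lemma (the six positive curvature $2$-graphs), $S$ is one of the six little mice; in particular $S$ is a $2$-sphere, and therefore $S$ is simply connected (standard fact for $2$-spheres, which follows because a $2$-sphere minus a vertex is contractible and any closed curve can be pushed off that vertex by deformation steps).

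Consequently $C'$ bounds a homotopy to a point inside $S$. Since the immersion $S \to G$ sends triangles to triangles and edges to edges, each deformation step performed in $S$ is a legitimate deformation step in $G$. Pushing the null-homotopy of $C'$ forward to $G$ gives a null-homotopy of $C'$ in $G$, and composing with the earlier homotopy from $C$ to $C'$ shows that $C$ is null-homotopic. Thus $\pi_1(G)$ is trivial and $G$ is simply connected.

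The main obstacle I anticipate is the passage from immersed to embedded: one has to be confident that ``immersed $2$-graph'' really behaves like an embedding for the purposes of both inheriting positive curvature and transporting homotopies. Both reduce to the immersion being locally a graph isomorphism on unit balls, which is exactly what the geomag construction in the previous lemma delivers (wheels are built one-at-a-time from existing vertices of $G$, so each local chart is a subgraph embedding). Once this is checked, the rest of the argument is a direct appeal to the classification and to the fact that spheres are simply connected.
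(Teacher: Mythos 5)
Your overall route is exactly the paper's: extend a geodesic loop to an immersed $2$-graph via the geomag/loop-extension corollary, observe that it inherits positive curvature and hence, by the classification of the six positive-curvature $2$-graphs, is a $2$-sphere, then contract the loop inside that sphere and push the deformation steps forward into $G$. The paper's own proof is the three-sentence version of this, so the body of your argument is fine and in fact supplies details (curvature inheritance under the immersion, why local injectivity suffices for transporting homotopies) that the paper leaves implicit.

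The one step that does not hold up as written is your reduction from an arbitrary closed curve to a geodesic loop. You claim that if a sub-arc of the length-minimizer $C'$ in a homotopy class fails to be length-minimizing, a ``triangle-move shortening'' produces a shorter curve in the same class. But shortcutting an arc to a geodesic is in general \emph{not} achievable by deformation steps through triangles --- if it were, every loop could be contracted by repeated shortening and the theorem would be vacuous --- and simply substituting a minimizing arc $\sigma$ for the sub-arc $\alpha$ can change the homotopy class by the loop $\alpha\sigma^{-1}$. The standard repair is to argue by contradiction with the globally shortest \emph{non-contractible} loop $C'$ of length $L$: if $p,q$ split $C'$ into arcs $\alpha,\beta$ with $|\alpha|\leq|\beta|$ and $\alpha$ is not a geodesic arc, replace it by a minimizing $\sigma$; then $C'$ is homotopic to the concatenation of the loops $\alpha\sigma^{-1}$ and $\sigma\beta$, both of length strictly less than $L$, so at least one of them is a shorter non-contractible loop, a contradiction. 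This shows the shortest essential loop is a geodesic loop in the paper's sense, and the rest of your argument then applies to it. (To be fair, the paper omits this reduction entirely; but since you chose to include it, the justification should be the correct one.)
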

\begin{proof}
A geodesic loop in $G$ can be extended to a 2-sphere $S$. This $2$-sphere $S$
is simply connected. We can deform the loop to a point by making the 
deformation on $S \subset G$. 
\end{proof}

\section{Classical results}

\paragraph{}
In this section, we mention some history. The combinatorial version can help to understand a
major core point of differential geometry: ``local conditions like positive curvature can have
a global topological effect". A special question is to see how positive curvature relates to the
diameter or injectivity radius, an other question is how it affects cohomology. 
The ultimate question is to relate it to a particular class of manifolds like 
spherical space forms (quotients of a sphere by a finite subgroup of the orthogonal group) 
or spheres and in which category the relation is done (i.e. continuous or diffeomorphism).
For a panoramic view over some major ideas of differential geometry, see \cite{BergerPanorama}. 
For history, see \cite{Berger2002}. 

\paragraph{}
The topic of search for local conditions enforce global conditions is central in differential geometry.
To cite \cite{Berger2002}: {\it ``since Heinz Hopf in the late 20's the topic of curvature and 
topology has been and still remains the strongest incentive for research in Riemannian geometry".}
The theme is that positive curvature produces some sort of ``sphere" and that
negative curvature produces spaces which have universal covers which are Euclidean spaces. 
The former are sphere theorems pioneered by Hopf and Rauch, the later
Hadamard-Cartan type results. In dimension $1$, where curvature assumptions are mute, one has both,
a compact and connected 1-manifold is always a circle, which is a sphere and the universal cover is the real line.
In the continuum, we need an orientability assumption to get Synge or a pinching condition to get a sphere theorem.
Local-global statements appear also in combinatorics: the $4$-color theorem is a global statement about
the maximal number of colors if the local injectivity condition is satisfied. The $4$-color theorem is 
equivalent to the statement that $2$-spheres have chromatic number $3$ or $4$. 

\paragraph{}
Synge's theorem of 1936 is one of the oldest general results about positive curvature
Riemannian manifolds \cite{Synge,docarmo94}. It already used a general bound on the diameter
$L> \pi/\sqrt{K}$ which is called Myer's theorem in terms of the minimal value $K$ of the curvature. 
Synge's theorem states that a compact orientable and connected positive curvature manifold
is simply connected. It is not as deep as the sphere theorem of Rauch-Berger-Klingenberg 
\cite{Rauch51}, Berger and Klingenberg (see \cite{BergerPanorama, Petersen}),
which assures that a sufficiently pinched orientable positive 
curvature manifold is a sphere. More recent are differentiable sphere theorems, in particular
the theorem of Brendle and Schoen \cite{BrendleSchoen} which assures that
a complete, simply connected, quarter-pinched Riemannian manifold is diffeomorphic to the
standard sphere. By Synge theorem, one can replace the simply connectedness assumption 
with orientability.

\paragraph{}
Synge's result \cite{Synge} telling that a positive curvature manifold is simply connected
is appealing as it mixes positive curvature, a local differential geometric notion with
orientability and simply connectedness which are both of a global and topological nature. 
Synge already uses isometry argument. In \cite{AkhilSynge}, Synge's theorem is proven using a
theorem of Weinstein which tells that for an even-dimensional positive curvature manifold, an 
orientation preserving isometry $f: M \to M$ has a fixed point. 
For an odd-dimensional positive curvature manifold, an orientation reversing 
orientation has a fixed point. Weinstein's theorem follows for spheres or projective spaces
from the Lefschetz fixed point theorem but it is more general and is also remarkable as it only uses
the positive curvature assumption. Synge's theorem uses calculus of variations: a minimal geodesic connecting
$p$ with $f(p)$ has as a second variation a linear operator which is positive definite and
an isometry of a compact oriented even dimensional manifold has a fixed point.

\paragraph{}
As for the beginnings of the sphere theorem, (\cite{BrendleSchoen} and the introduction to 
\cite{Brendle2010, Schoen2017} give overviews), it was Heinz Hopf who, motivated 
largely by physics, conjectured, starting in 1932 that a sufficiently pinched positive curvature 
space must be a sphere. After Rauch visited ETH in 1948/1949, he proved the first theorem 
assuming a pinching condition of a bout $3/4$. Rauch's theorem is remarkable as it
is the first of this kind. It introduced a ``purse string method" which can be seen as a continuous
version of a geomag argument. 
The topological sphere theorem of 1960, proven by M. Berger and W. Klingenberg proves that under the
optimal $1/4$ pinching condition an orientable positive curvature manifold has to be 
a sphere. In 2007, R. Schoen and S. Brendle got then the differential case, 
using newly available Ricci flow deformation methods. 

\section{Questions}

\paragraph{}
Can one use the geomag idea to design a proof of the classical Synge theorem by some sort
of approximation? It would require some more technical things. Start with a geodesic
two-dimensional surface patch and extend it to a two-dimensional surface along a geodesic. 
Let $\gamma$ be a closed curve in $M$ which can not be contracted and let $p,q$ be two points in $\gamma$
of maximal distance apart. Extend $\gamma$ to a surface and
smooth it out, still making sure the positive curvature surface $S$ remains 
immersed in $M$. As it is a connected, compact
two-dimensional Riemannian manifold, it must be the projective plane or the $2$-sphere. In the case
when $M$ is orientable, $S$ is orientable too and must be a sphere. One can now contract $\gamma$ in 
the contractible 2-dimensional surface $S$. 

\paragraph{}
In order to weaken positive curvature to get closer to the continuum sphere theorems, one could
use the notion of {\bf Forman-Ricci curvature} \cite{BHJLW}. Lets just call it Ricci curvature. 
Ricci curvature of a $2$-graph is a function attached to edges. 
Positive Forman curvature means that for
every edge $e=(a,b)$, the Ricci curvature $K(e)=1-{\rm deg}(a,b)/6$ is positive, 
where ${\rm deg}(a,b) = ({\rm deg}(a) + {\rm deg}(b))/2$.
The Ricci curvature does not satisfy any Gauss-Bonnet formula but it is likely to lead to a 
sphere theorem. We have not counted the number of $2$-graphs with that positive Ricci curvature.

\paragraph{}
By averaging curvatures differently, one can get other notions of curvature. Still according to 
Forman, one can assign a curvature to triangles $f=(a,bc) $ by $K(f) = 1-{\rm deg}(a,b,c)/6$ 
where ${\rm deg}(a,b,c)$ the average  of the vertex degrees of the vertices $a,b,c$. 
There is no doubt that there is a sequence of sphere theorems in a combinatorial setting 
which captures more aspects the continuum and still relies only on the geomag construction idea. 
This still needs to be done and there will be a threshold, where projective planes will be allowed
forcing to include the orientability condition as in the continuum. 

\begin{figure}[!htpb]
\scalebox{0.6}{\includegraphics{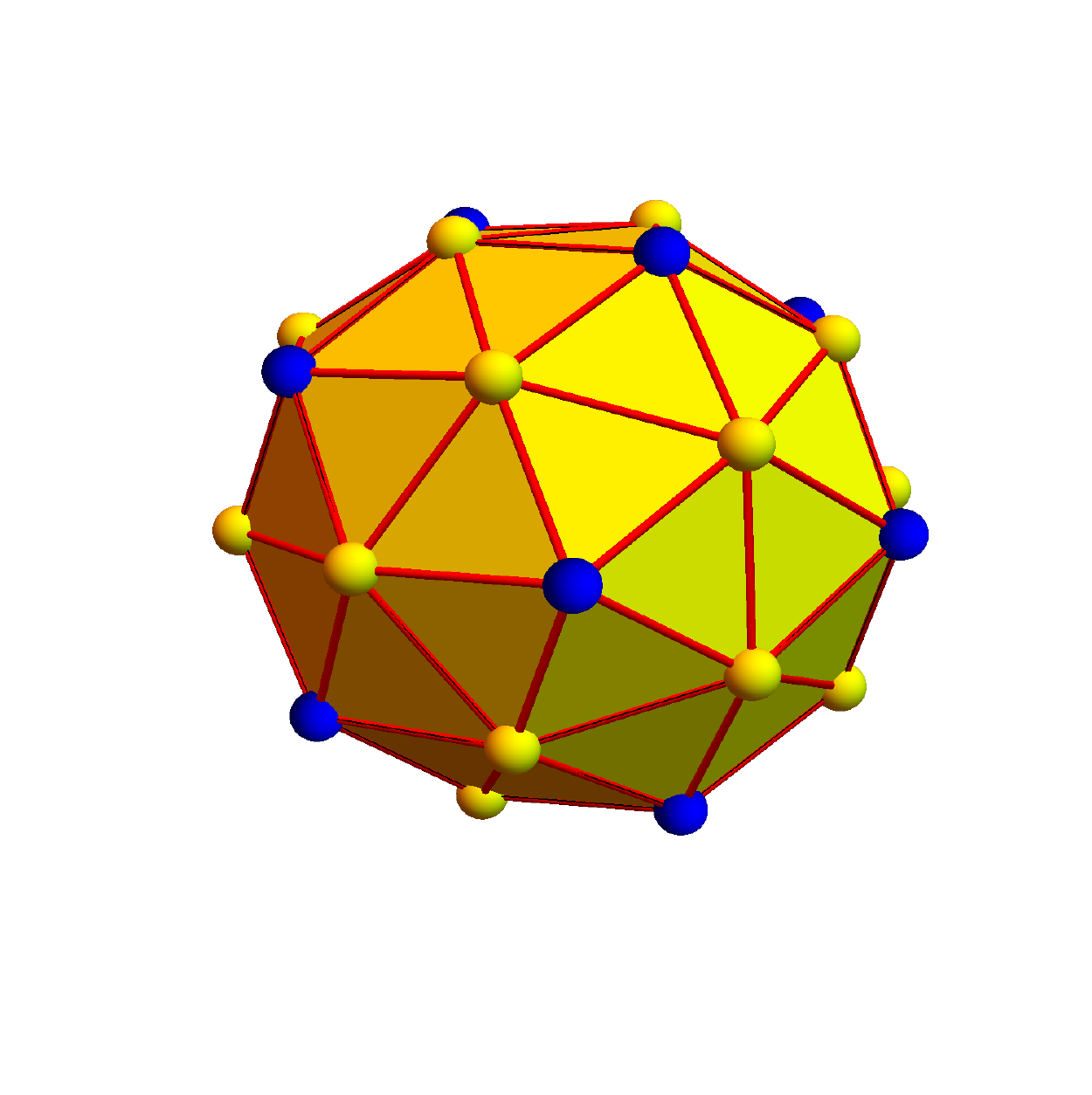}}
\scalebox{0.6}{\includegraphics{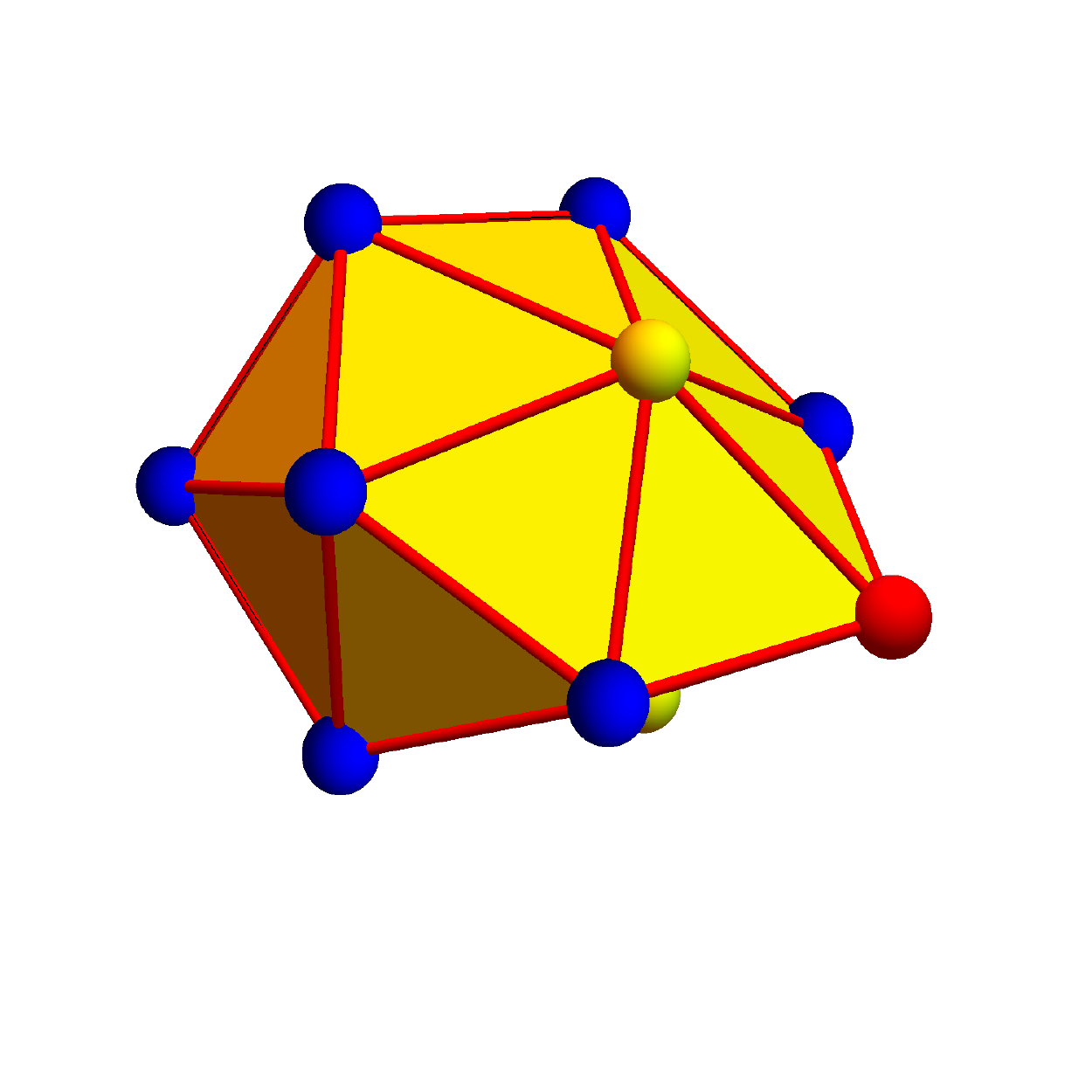}}
\label{jenny}
\caption{
The Pentakis dodecahedron has vanishing Ricci curvature at some edges.
A once edge refined icosahedron has positive Ricci curvature everywhere
but two vertices with zero curvature. Unlike the curvature $1-{\rm deg}(x)/6$ for
$2$-graphs, the Ricci curvature does not satisfy any Gauss-Bonnet identity.
}
\end{figure}

\paragraph{}
Various definitions of curvature have been proposed in the discrete. 
Some of them are quite involved. Looking at second order curvatures $S_2(x)-2S_1(x)$ 
does not work well and are computationally complicated. Already in the planar case
\cite{elemente11}, this Puiseux type curvature already leads to sensitive issues
when proving an Umlaufsatz. Ollivier type curvatures \cite{Ollivier,BHJLW} are even 
tougher to work with. We take the point of view that the definition of curvature 
should be simple and elementary. 

\paragraph{}
An interesting open combinatorial problem is to enumerate all positive curvature 
graphs in $d$, when taking the curvature assumption of this paper.
Similarly, we would like to get all
$2$-dimensional genus $g>1$ graphs of negative curvature (we believe that there are none). 
All $2$-dimensional positive curvature cases are obtained by doing edge refinements starting with the 
octahedron, We can ask whether every $d$-dimensional positive curvature graph 
can be obtained from edge refinements. 

\paragraph{}
One can define notions of Ricci curvatures different than what Forman did. The following definition
gives the scalar curvature in the case $d=2$. 
Can we prove as in the continuum that positive Ricci curvature, a quantity assigned to edges given as the 
average of all curvatures of wheel graphs containing $e$, imply a bound on the diameter of $G$? 
This would be a more realistic Myers theorem. 

\paragraph{}
What possible vertex cardinalities do occur of positive curvature graphs. 
In two dimensions, we see that for $d=2$, the icosahedron, the graph with a maximal number of vertices
sis unique. Is this true in dimension $d$ also? If not how many are there with maximal cardinality. 
We believe that the suspensions of icosahedron are the largest positive curvature graphs in any dimension. 
These are the cross polytopes of Schl\"afli. In the sense defined here, these cross polytopes are
the only Platonic $d$-spheres in dimension $d \geq 5$ (the other two classically considered, the 
hyper-cubes or the hyper-simplices are not d-graphs.)

\paragraph{}
In classical differential geometry dealing with $d$-manifolds $M$, there are a couple of 
notions of intrinsic curvature, curvature which does not depend on $M$ being 
embedded in a higher dimensional space. The theorema egregia of Gauss allows to see 
sectional curvatures as independent of the embedding and use it to define the 
Riemann curvature tensor, averaging sectional curvature over 2-planes intersecting in a line gives Ricci 
curvature, averaging all Ricci curvatures gives scalar curvature which enters the Hilbert action.
Then there is the Euler curvature, a Pfaffian of the Riemann curvature tensor appears in 
Gauss-Bonnet-Chern. 

\section*{Afterword}

\paragraph{}
The topic relating local properties like curvature with global topological features
is an interesting theme also in physics. The reason is that basic fundamental laws in physics are 
local by nature if information needs time to propagate. 
Curvature in particular is a fundamental local quantity. Relativity relates it to mass and energy. 
Having a definite sign of curvature is desirable for various reasons. Positive
curvature and orientability implies simply-connectedness. Negative curvature is often dubbed 
anti-de-Sitter and assures no conjugate points for the geodesic flow. This simplifies physics. 

\paragraph{}
The original investigations by Heinz Hopf have been motivated by physics. As 
cited in \cite{Schoen2017}, Hopf wrote in 1932:
``The problem of determining the global structure of a space form
from its local metric properties and the connected one of
metrizing - in the sense of differential geometry-a given topological
space, may be worthy of interest for physical reasons". At that time,
the geometrization of gravity due to Einstein had been 
a major drive to investigate more of differential geometry. 

\paragraph{}
Cosmological questions related to curvature about space-time
been investigated early, in particular by Willem de Sitter. 
A de Sitter space is a positive curvature analogue of the Minkowski space. 
The Synge result that it is simply connected is important as a non-simply connected
manifold produces twin paradox problems, where traveling along a geodesic
coming back to the same point in an equivalent reference frame produces serious
causality issues.

\begin{figure}[!htpb]
\scalebox{0.3}{\includegraphics{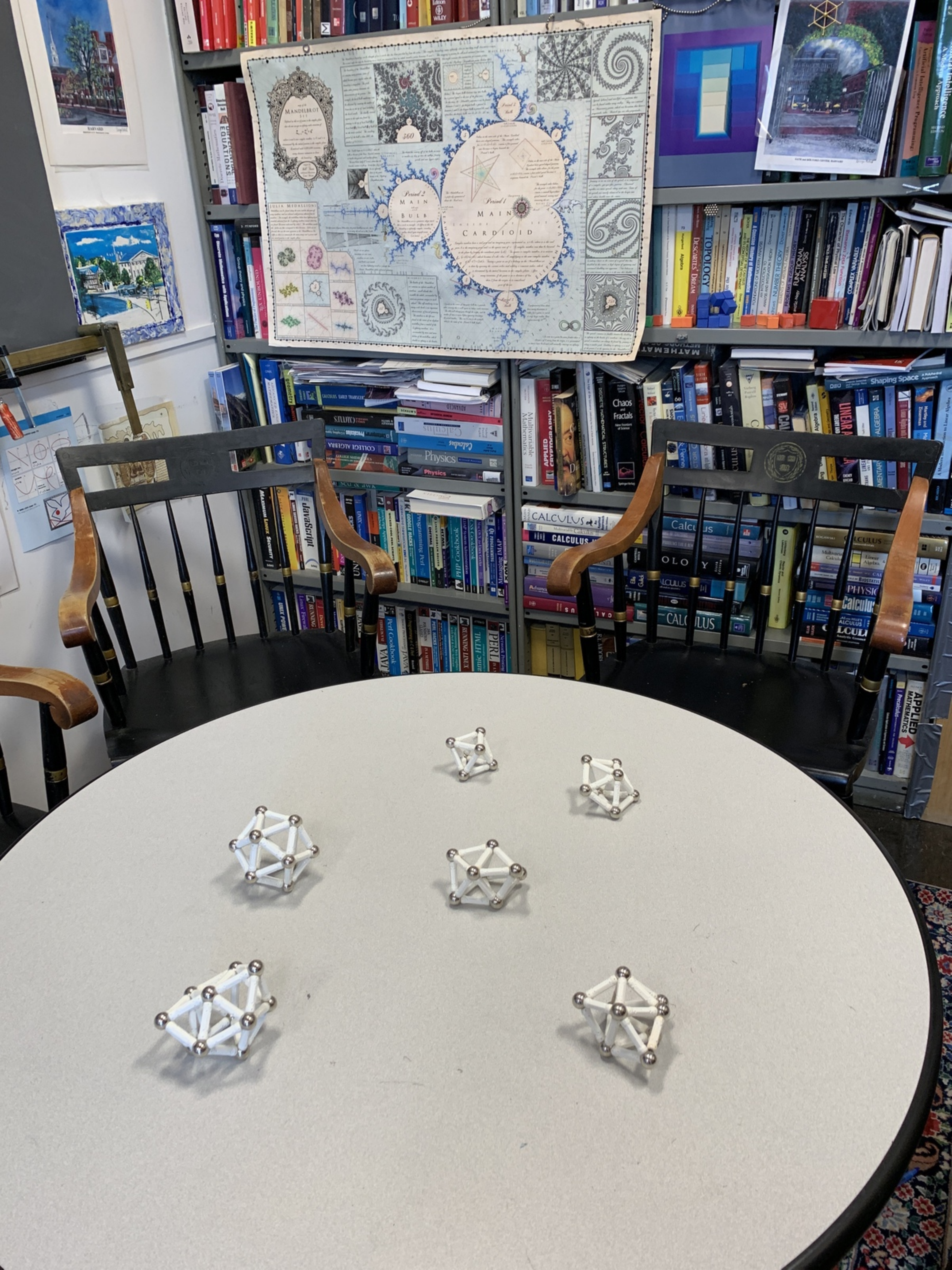}}
\label{jenny}
\caption{
The $6$ positive curvature $2$-graphs physically built 
with the magnetic building tool ``geomag".
}
\end{figure}

\bibliographystyle{plain}

\end{document}